\documentclass[reqno]{amsart}

\setlength{\textwidth}{470pt}
\setlength{\oddsidemargin}{0pt}
\setlength{\evensidemargin}{0pt}

\usepackage{mystyle}

\usepackage{microtype}
\usepackage{subfigure}
\usepackage{booktabs}
\usepackage[hidelinks,breaklinks]{hyperref}

\usepackage{amsmath}
\usepackage{amssymb}
\usepackage{mathtools}
\usepackage[foot]{amsaddr}
\usepackage{cite}
\usepackage{textcomp}
% \usepackage[backend=biber,style=apa]{biblatex}
% \bibliography{references.bib}

\usepackage[capitalize,noabbrev]{cleveref}

\theoremstyle{plain}
\newtheorem{theorem}{Theorem}[section]
\newtheorem{proposition}[theorem]{Proposition}
\newtheorem{lemma}[theorem]{Lemma}

\theoremstyle{definition}
\newtheorem{definition}[theorem]{Definition}

\theoremstyle{remark}
\newtheorem{remark}[theorem]{Remark}

\makeatletter
\def\paragraph{\medskip\@startsection{paragraph}{4}%
  \z@\z@{-\fontdimen2\font}%
  {\normalfont\bfseries}}
\makeatother

\usepackage{autonum}

\title[Unified framework for convex optimization methods: numerical analysis approach]{A new unified framework\\ for designing convex optimization methods\\ with prescribed theoretical convergence estimates:\\ A numerical analysis approach}
\author{Kansei Ushiyama}
\email{ushiyama-kansei074@g.ecc.u-tokyo.ac.jp}
\author{Shun Sato}
\author{Takayasu Matsuo}
\address{Department of Mathematical Informatics, Graduate School of Information Science and Technology, The University of Tokyo, Tokyo, Japan}

\begin{document}

\begin{abstract}
We propose a new unified framework for describing and designing gradient-based convex optimization methods from a numerical analysis perspective.
There the key is the new concept of {\em weak discrete gradients} (weak DGs), which is a generalization of DGs standard in numerical analysis.
Via weak DG, we consider abstract optimization methods, and prove unified convergence rate estimates that hold independent of the choice of weak DGs except for some constants in the final estimate.
With some choices of weak DGs, we can reproduce many popular existing methods, such as the steepest descent and Nesterov's accelerated gradient method,
and also some recent variants from numerical analysis community.
By considering new weak DGs, we can easily explore new theoretically-guaranteed optimization methods; we show some examples. 
We believe this work is the first attempt to fully integrate research branches in optimization and numerical analysis areas, so far independently developed.
\end{abstract}

\keywords{Convex optimization, Numerical analysis, Ordinary differential equations, Convergence estimate}

\maketitle

\section{Introduction}
\label{sec:intro}

This work aims at defining a new unified approach for designing optimization methods by integrating standard optimization theories and some theories from numerical analysis.
This section first summarizes an underlying big view.
Then we survey related literature, and clarify our main contributions.

\subsection{Preliminaries and notation}
\subsubsection{Basic concepts in optimization}

In this paper, we consider unconstrained optimization problems
\begin{equation}
\min_{ x \in \RR^d} f (x)
\end{equation}
on the $d$-dimensional Euclidean space $ \RR^d$ ($d$ is a positive integer) with the standard inner product $ \inpr{\cdot}{\cdot} $ and the induced norm $ \norm{\cdot }$, 
where $f \colon \RR^d \to \RR$ denotes a differentiable objective function.
We assume the existence of the optimal value $ f^{\star} $ and the optimal solution $ x^{\star}$. 

The following function classes are essential.
\begin{definition}
A differentiable function $f$ is said to be $L$-smooth if $ \nabla f $ is $L$-Lipschitz continuous. 
\end{definition}

\begin{definition}
A function $f$ is said to be $\mu$-strongly convex if $ f - \normm{\cdot}/2 $ is convex. 
\end{definition}

When $f$ is $\mu$-strongly convex and differentiable, 
\begin{equation}\label{scineq}
\frac{\mu}{2} \normm{y-x} \le f(y) - f(x) - \inpr{\nabla f(x)}{y-x}
\end{equation}
holds for any $ x, y \in \RR^d $. 

In this paper, we mainly consider convex objective functions (see~\cref{par:PL} for an exception).

\subsubsection{Continuous systems for optimization and their convergence rates}
\label{subsubsec:rates_conti}

For the unconstrained optimization problem, we can consider various optimization methods.
Here, however, let us consider continuous systems that can serve as optimizers.

To this end, it is essential to consider gradient systems (ordinary differential equations; ODEs), for example, the gradient flow
\begin{equation}\label{contgf}
    \dot{x} = - \nabla f(x), \quad x(0) = x_0 \in \RR^d. 
\end{equation}
% \begin{equation}\label{gengf}
%   \dot{x}(t) = -P \nabla f(x(t)),\quad x(0) = x_0 \in \RR^d, 
% \end{equation}
%where $P \in \RR^{d\times d}$ is a positive-semidefinite matrix. %or skew-symmetric matrix.
Observe that it is driven by the gradient of the objective function.
As its mathematical consequence, the value of the objective function decreases:
\begin{align}
  \dt f(x(t)) = \inpr{\nabla f(x(t))}{\dot{x}(t)}
  = -\norm*{\nabla f(x(t))}^2 \le 0,
  \label{pf:gengf}
\end{align}
which means it can serve as an optimizer.
Notice that the proof relies on the {\em chain rule of differentiation}, and the {\em form of the system} itself~\eqref{contgf}; this will be extremely important in what follows.
%If $P$ is skew-symmetric, $f(x(t)) = f(x_0)$ holds for all $t$ since the same calculation of the derivative of $f(x(t))$.

% Usually the simplest gradient flow is sufficient:
% \begin{equation}\label{contgf}
%     \dot{x} = - \nabla f(x), \quad x(0) = x_0 \in \RR^d,
% \end{equation}
% and below we mainly consider it.

Despite its simpleness, its convergence rate varies depending on the class of objective functions.
Below we show some known results.
The following rates are proved by the so-called Lyapunov argument, 
which introduces a ``Lyapunov function'' that explicitly contains the convergence rate.
The proof is left to \cref{app:proof_conti}, but we recommend the readers to have a glance at it (at least~\cref{subsec:proof_cr_conv}) and notice that it requires~the {\em convexity inequality}~\eqref{scineq}; this will be important as well.
%Note that this function is different from the Lyapunov function in the context of dynamical systems.

%\subsection{Gradient flow}
%
\begin{theorem}[Convex case]\label{thm:cr_conv}
  Suppose that $f$ is convex. Let $x \colon [0,\infty) \to \RR^d$ be the solution of the gradient flow~\eqref{contgf}.
  Then the solution satisfies
  \begin{equation}
    f(x(t)) - f^\star \le \frac{\normm{x_0-x^\star}}{2t}.
  \end{equation}
\end{theorem}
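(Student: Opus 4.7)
The plan is to run a Lyapunov-function argument, following the recipe the authors flagged just before the statement: use the chain rule, plug in the form of the ODE, and invoke the convexity inequality. The natural candidate Lyapunov function is
\begin{equation*}
    \mathcal{E}(t) \coloneqq t\bigl(f(x(t)) - f^{\star}\bigr) + \tfrac{1}{2}\norm{x(t) - x^{\star}}^{2},
\end{equation*}
which is tailored so that $\mathcal{E}(0) = \tfrac{1}{2}\norm{x_0 - x^{\star}}^{2}$ and so that extracting $f(x(t))-f^{\star}$ from the bound $\mathcal{E}(t)\le \mathcal{E}(0)$ yields exactly the desired $1/t$ rate.

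Next, I would differentiate $\mathcal{E}$ in $t$ using the chain rule, obtaining
\begin{equation*}
    \dot{\mathcal{E}}(t) = \bigl(f(x(t)) - f^{\star}\bigr) + t\inpr{\nabla f(x(t))}{\dot{x}(t)} + \inpr{x(t) - x^{\star}}{\dot{x}(t)}.
\end{equation*}
Substituting the form of the gradient flow $\dot{x} = -\nabla f(x)$ collapses the middle term into $-t\norm{\nabla f(x(t))}^{2}$ (nonpositive) and turns the last term into $-\inpr{x(t)-x^{\star}}{\nabla f(x(t))}$. Then I would apply the convexity inequality (which is~\eqref{scineq} with $\mu=0$, used at $y=x^{\star}$, $x=x(t)$) to bound
\begin{equation*}
    f(x(t)) - f^{\star} \le \inpr{\nabla f(x(t))}{x(t) - x^{\star}},
\end{equation*}
so that the first and third terms cancel and we are left with $\dot{\mathcal{E}}(t) \le -t\norm{\nabla f(x(t))}^{2} \le 0$.

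Finally, integrating from $0$ to $t$ gives $\mathcal{E}(t)\le \mathcal{E}(0)=\tfrac{1}{2}\norm{x_0 - x^{\star}}^{2}$, and since $\tfrac{1}{2}\norm{x(t)-x^{\star}}^{2}\ge 0$ can be dropped from the left-hand side, dividing by $t$ yields the claimed estimate. There is no real obstacle here: the only nontrivial choice is the Lyapunov function itself, and once it is written down the computation is mechanical. The reason to highlight the proof, as the authors suggest, is that it exposes the three ingredients (chain rule, ODE structure, convexity inequality) that the later discrete analogue via weak discrete gradients will need to reproduce.
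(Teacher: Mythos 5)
Your proof is correct and follows exactly the same route as the paper: the identical Lyapunov function $E(t) = t(f(x(t))-f^\star) + \tfrac12\normm{x(t)-x^\star}$, differentiated via the chain rule, with the ODE substituted and the convexity inequality~\eqref{scineq} (at $\mu=0$, $y=x^\star$) applied to conclude $\dot E \le 0$. Nothing to add.
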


\begin{theorem}[Strongly convex case]\label{thm:cr_sc}
  Suppose that $f$ is $\mu$-strongly convex. Let $x \colon [0,\infty) \to \RR^d$ be the solution of the gradient flow~\eqref{contgf}. 
  Then the solution satisfies
  \begin{equation}	
		f(x(t)) - f^\star \le \e{-\mu t}\normm{x_0-x^\star}.
	\end{equation}
\end{theorem}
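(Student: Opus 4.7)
The plan is to mirror the Lyapunov-function argument alluded to for \cref{thm:cr_conv}, but tailored so that the strong-convexity inequality~\eqref{scineq} produces exponential rather than polynomial decay. Since the target bound carries the factor $\e{-\mu t}$, the natural Lyapunov candidate has the form $V(t) = \e{\mu t}\,\mathcal{E}(x(t))$, where $\mathcal{E}$ measures suboptimality---either $\mathcal{E}(x) = f(x)-f^\star$, the squared distance $\tfrac{1}{2}\normm{x-x^\star}$, or a convex combination of the two. The exponential prefactor is chosen precisely so that the $+\mu V$ contribution coming from differentiating $\e{\mu t}$ is absorbed by one application of~\eqref{scineq}.

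Concretely, I would first differentiate $V$ using the chain rule and substitute the gradient-flow equation~\eqref{contgf} to eliminate $\dot x$. This reduces $\dot V$ to an algebraic expression in $f(x(t))-f^\star$, $\nabla f(x(t))$, and $x(t)-x^\star$. I would then invoke~\eqref{scineq} in one of two equivalent ways: (i) with $x=x(t)$ and $y=x^\star$, which yields $\inpr{\nabla f(x)}{x-x^\star}\ge (f(x)-f^\star) + \tfrac{\mu}{2}\normm{x-x^\star}$; or (ii) by minimising the quadratic lower bound in~\eqref{scineq} over $y$, which produces the Polyak--{\L}ojasiewicz-type estimate $\norm{\nabla f(x)}^2 \ge 2\mu(f(x)-f^\star)$. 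Either route shows $\dot V \le 0$, and the monotonicity of $V$ then yields $\mathcal{E}(x(t)) \le \e{-\mu t}\mathcal{E}(x_0)$, from which the stated bound is to be extracted.

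The delicate point is choosing $\mathcal{E}$ so that $V(0)$ matches the right-hand side $\normm{x_0-x^\star}$ in the statement. Taking $\mathcal{E}(x)=f(x)-f^\star$ produces $\e{-\mu t}(f(x_0)-f^\star)$, while taking $\mathcal{E}(x)=\tfrac{1}{2}\normm{x-x^\star}$ controls the distance rather than the function value. The most flexible choice is a hybrid such as $V(t)=\e{\mu t}\bigl[(f(x(t))-f^\star)+\tfrac{\mu}{2}\normm{x(t)-x^\star}\bigr]$, after which a final bookkeeping step, e.g.\ invoking~\eqref{scineq} at $x_0$ or absorbing a constant, collapses the initial value into the claimed form. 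The main obstacle is thus this constant-management step; the differential mechanism ``differentiate, substitute~\eqref{contgf}, invoke~\eqref{scineq}, conclude $\dot V\le 0$'' is exactly parallel to the convex case and poses no conceptual difficulty.
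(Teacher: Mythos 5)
Your proposal matches the paper's proof: the paper takes exactly your hybrid Lyapunov function $E(t)=\e{\mu t}\bigl[(f(x(t))-f^\star)+\tfrac{\mu}{2}\normm{x(t)-x^\star}\bigr]$, differentiates, substitutes the flow, and applies~\eqref{scineq} in your form (i) to obtain $\dot{\tilde E}\le-\mu\tilde E$. The ``constant-management'' step you flag is not actually resolved in the paper either---its argument yields the bound $\e{-\mu t}\bigl(f(x_0)-f^\star+\tfrac{\mu}{2}\normm{x_0-x^\star}\bigr)$ rather than the stated $\e{-\mu t}\normm{x_0-x^\star}$---so your hesitation there points at a real discrepancy in the statement rather than a gap in your argument.
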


Next, we consider ODEs corresponding to the accelerated gradient methods, including Nesterov's one.
As is well known, the forms of the accelerated gradient methods are different depending on the class of objective functions, and consequently the ODEs to be considered also change. 
In this paper, we call them \emph{accelerated gradient flows}. 

%\subsection{Accelerated gradient flow}
%

When the objective functions are convex, we consider the following ODE~\cite{WRJ21}:
let $A \colon \RR_{\ge 0} \to \RR_{\ge 0}$ be a differentiable strictly monotonically increasing function with $A(0)=0$, and
  \begin{equation} 
    \simulparen{
      \dot{x} &= \frac{\dot{A}}{A}(v-x),\\
      \dot{v} &= -\frac{\dot{A}}{4} \nabla f(x),
    }  \label{accgf}
  \end{equation}
  with $(x(0),v(0)) = (x_0,v_0) \in \RR^d \times \RR^d$. 

\begin{theorem}[Convex case~\cite{WRJ21}]\label{thm:cr_ac_conv}
  Suppose that $f$ is convex. 
  Let $(x,v) \colon [0,\infty) \to \RR^d \times \RR^d$ be the solution of the ODE~\eqref{accgf}.
  Then it satisfies
  \begin{equation}
    f(x(t)) - f^\star \le \frac{2\norm{x_0 - x^\star}^2}{A(t)}.
  \end{equation} 
\end{theorem}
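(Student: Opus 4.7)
The plan is to construct a Lyapunov function whose monotonicity in $t$ immediately yields the claimed estimate, exactly in the spirit of the proofs of~\cref{thm:cr_conv,thm:cr_sc}. Guided by the two pieces of ``data'' emphasized in the excerpt --- the chain rule and the form of the ODE, together with the convexity inequality --- a natural candidate is
\begin{equation*}
  \mathcal{E}(t) \coloneqq A(t)\bigl(f(x(t)) - f^\star\bigr) + 2\normm{v(t) - x^\star}.
\end{equation*}
This mixes the suboptimality gap (weighted by $A$) with a squared distance of the ``dual'' variable $v$ to the minimizer, and the coefficients $A$ and $2$ are the canonical tunings that make the cross terms cancel.

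First, I would differentiate $\mathcal{E}$ using the chain rule, which gives
\begin{equation*}
  \dt\mathcal{E}
  = \dot A\bigl(f(x)-f^\star\bigr)
  + A\inpr{\nabla f(x)}{\dot x}
  + 4\inpr{v-x^\star}{\dot v}.
\end{equation*}
Plugging in the ODE~\eqref{accgf}, the factor $A$ in the second term cancels against the $1/A$ in $\dot x$, and the $4$ in the third term cancels against the $1/4$ in $\dot v$, yielding after grouping
\begin{equation*}
  \dt\mathcal{E}
  = \dot A\bigl(f(x)-f^\star\bigr) + \dot A\inpr{\nabla f(x)}{v-x} - \dot A\inpr{\nabla f(x)}{v-x^\star}
  = \dot A\Bigl[\bigl(f(x)-f^\star\bigr) - \inpr{\nabla f(x)}{x-x^\star}\Bigr].
\end{equation*}

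Next, I would invoke convexity --- that is, the inequality~\eqref{scineq} with $\mu=0$, applied at $y=x^\star$ and $x=x(t)$ --- which tells us that the bracket is nonpositive. Since $A$ is strictly increasing, $\dot A \ge 0$, and therefore $\dt\mathcal{E}\le 0$. Integrating in time and using $A(0)=0$ (so that the $f$-gap contribution at $t=0$ disappears), we obtain $\mathcal{E}(t)\le \mathcal{E}(0) = 2\normm{v_0-x^\star}$. Specializing to the standard initialization $v_0 = x_0$ (implicit in the statement of the theorem) and dividing by $A(t)>0$ for $t>0$ gives the announced rate.

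The only nontrivial step is guessing the Lyapunov function --- specifically, the weights $A(t)$ and $2$. Once the candidate is written down, the computation is a routine application of the chain rule together with a single use of the convexity inequality, and the cancellations in the derivative happen almost algorithmically. I would expect the main obstacle, in a blind attempt, to be precisely this choice; it is engineered so that the $\dot x$-contribution from differentiating $f(x)$ and the $\dot v$-contribution from differentiating $\normm{v-x^\star}$ combine into the convexity residual with matching $\dot A$ factors and no leftover terms.
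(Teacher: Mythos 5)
Your proof is correct and takes essentially the same route as the paper: the identical Lyapunov function $E(t) = A(t)\bigl(f(x(t))-f^\star\bigr) + 2\normm{v(t)-x^\star}$, the same differentiation, substitution of the ODE, and single application of convexity. Your observation that the conclusion as stated requires $v_0 = x_0$ is a fair catch --- the argument literally yields $E(0) = 2\normm{v_0 - x^\star}$ in the numerator, which is also what the paper's proof produces.
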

\begin{remark}
If we set $A(t)=t^2$, this system coincides with a continuous limit ODE of the accelerated gradient method for convex functions
\begin{equation}
  \ddot{x} + \frac{3}{t}\dot{x} + \nabla f(x)=0,
\end{equation}
which is derived in~\cite{SBC16}.
\end{remark}

Next, for strongly convex objective functions, let us consider the ODE~\cite{WRJ21}:
  \begin{equation}
    \simulparen{
      \dot{x} &= \mur(v -x), \\
      \dot{v} &= \mur(x-v-\nabla f(x)/\mu)
    } \label{accgfsc}
  \end{equation}
  with $ (x(0),v(0)) = (x_0,v_0) \in \RR^d \times \RR^d$. 

\begin{theorem}[Strongly convex case~\cite{WRJ21,LC22}]\label{thm:cr_ac_sc}
  Suppose that $f$ is $\mu$-strongly convex. Let $(x,v) \colon [0,\infty) \to \RR^d \times \RR^d$ be the solution of~\eqref{accgfsc}.
  Then it satisfies
  \begin{equation}
		f(x(t))-f^\star \le \e{-\mur t}\paren*{f(x_0)-f^\star + \frac{\mu}{2}\normm{v_0-\xs}}.
  \end{equation}
\end{theorem}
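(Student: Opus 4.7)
The plan is to follow the Lyapunov argument signalled in the excerpt, mirroring the structure already used for Theorems 1.1--1.3. Concretely, I would introduce the candidate Lyapunov function
\begin{equation*}
  \mathcal{E}(t) = \e{\sqrt{\mu}\,t}\paren*{f(x(t))-f^\star + \frac{\mu}{2}\normm{v(t)-x^\star}},
\end{equation*}
and prove $\dot{\mathcal{E}}(t) \le 0$. The desired bound then follows because $f(x(t))-f^\star \le \mathcal{E}(t)\e{-\sqrt{\mu}\,t} \le \mathcal{E}(0)\e{-\sqrt{\mu}\,t}$.

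The core computation is to differentiate the bracketed part $V(t) := f(x)-f^\star + (\mu/2)\normm{v-x^\star}$ along the flow~\eqref{accgfsc}. Using the chain rule (this is the ``chain rule of differentiation'' step emphasised earlier in the excerpt) together with the explicit form of the ODE, I would expand
\begin{equation*}
  \dot{V} = \sqrt{\mu}\inpr{\nabla f(x)}{v-x} + \mu^{3/2}\inpr{v-x^\star}{x-v} - \sqrt{\mu}\inpr{v-x^\star}{\nabla f(x)},
\end{equation*}
and then regroup the inner products involving $\nabla f(x)$ into $\sqrt{\mu}\inpr{\nabla f(x)}{x^\star-x}$. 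At this point I would invoke the strong convexity inequality~\eqref{scineq} with $(x,y)=(x,x^\star)$ to bound $\inpr{\nabla f(x)}{x^\star-x} \le f^\star - f(x) - (\mu/2)\normm{x-x^\star}$, and bound the remaining quadratic term via the identity $\inpr{v-x^\star}{x-v} = \inpr{v-x^\star}{x-x^\star} - \normm{v-x^\star}$ combined with the Cauchy--Schwarz/Young inequality $\inpr{v-x^\star}{x-x^\star} \le (\normm{x-x^\star}+\normm{v-x^\star})/2$.

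The hoped-for algebraic miracle is that the $\normm{x-x^\star}$ contributions cancel exactly, leaving $\dot{V} \le -\sqrt{\mu}\,V$, which is precisely what is needed so that $\dot{\mathcal{E}} = \e{\sqrt{\mu}\,t}(\sqrt{\mu}\,V + \dot V) \le 0$. This cancellation is the main obstacle: the coefficients $\sqrt{\mu}$ in $\dot x$ and $\dot v$ and the $\nabla f(x)/\mu$ correction in $\dot v$ are tuned so that the $-\sqrt{\mu}(\mu/2)\normm{x-x^\star}$ coming from strong convexity exactly annihilates the $+\mu^{3/2}(1/2)\normm{x-x^\star}$ coming from the Young-type bound; any misstep in bookkeeping these factors of $\sqrt{\mu}$ will destroy the rate. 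Once this identity is verified, integrating $\dot{\mathcal{E}}\le 0$ from $0$ to $t$ and dropping the non-negative term $(\mu/2)\normm{v(t)-x^\star}$ on the left yields the stated estimate.
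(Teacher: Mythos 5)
Your proposal is correct and follows essentially the same route as the paper: the identical Lyapunov function $\e{\mur t}\paren*{f(x)-f^\star+\frac{\mu}{2}\normm{v-\xs}}$, the same chain-rule expansion and substitution of the ODE, the same regrouping of the gradient terms into $\inpr{\nabla f(x)}{\xs-x}$, and the same exact cancellation of the $\normm{x-\xs}$ contributions against the strong-convexity term. The only cosmetic difference is that you bound $\inpr{v-\xs}{x-v}$ via Cauchy--Schwarz/Young, whereas the paper uses the exact law-of-cosines identity and discards the resulting nonpositive term $-\frac{\mu}{2}\normm{v-x}$ at the very end; the two computations coincide up to where that term is dropped.
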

\begin{remark}
  This system coincides with a continuous limit ODE of the accelerated gradient method for strongly convex functions~\cite{P64}
  \begin{equation}
    \ddot{x} + 2\mur \dot{x} + \nabla f(x) = 0.
  \end{equation}
\end{remark}

Discretizing these ODEs, we obtain (or reproduce) (discrete) optimization methods.
Although this has been intensively considered in such an ODE-approach in optimization research field, it is also completely natural to consult theories in numerical analysis, where rich knowledge on the discretization of ODEs are accumulated.
Among which, we here cast a spotlight on the so-called {\em discrete gradient method}, which seems a tempting candidate for optimization ODEs.

\subsubsection{Discrete gradient method for gradient systems} \label{subsubsec:DG}
The discrete gradient method~\cite{G96,MQR99} is a structure-preserving numerical integration method for general gradient systems including the gradient flow~\eqref{contgf}. %~\eqref{gengf}.
Recall that it automatically decreases the value of objective functions (see~\eqref{pf:gengf}); there the keys were (i) the chain rule of differentiation, and (ii) the fact that the system is a gradient flow (i.e., the form of the equation itself). 
The discrete gradient (DG) method respects these keys and try to imitate them in discrete systems.

\begin{definition}[Discrete gradient~\cite{G96,QC96}]
  A continuous map $\DG f \colon \RR^d \times \RR^d \to \RR^d$ is said to be {\it discrete gradient of $f$} if the following two conditions hold for all $x,y \in \RR^d$:
  \begin{align}
    f(y) - f(x) &= \inpr{\DG f(y,x)}{y-x}, \label{dchain} \\
    \DG f(x,x) &= \nabla f(x) \label{dapprox}.
  \end{align}
\end{definition}
In the above definition, the second condition merely requests that $\DG f$ be an approximation of $\nabla f$. The first condition, on the other hand, is called {\it discrete chain rule} and is an essential condition for the key (i) above. 
The discrete chain rule is just a scalar equality constraint on the vector-valued function, and for a given $f$, there are generally infinitely many DGs. 
Some popular choices of DG will be presented in~\cref{sec:weakDG}.

Suppose we have a DG for a given $f$.
Then we can define a discrete scheme for the gradient flow~\eqref{contgf}:
\begin{equation}
  \frac{\x{k+1} - \x{k}}{h} = - \DG f \paren*{ \x{k+1},\x{k}}, \quad \x{0} = x_0,
\end{equation}
where the positive real number $ h $ is referred to as the step size,
and $\x{k}\simeq x(kh)$ is the numerical solution.
The left-hand side is an approximation of $ \dot{x} $ and is denoted by $ \dpx $ hereafter. 
Observe the definition respects the key point (ii) mentioned above.

The scheme happily decreases $f(\x{k})$ as expected:
\begin{align}
\delta^+ f \paren*{\x{k}} 
= \inpr*{\DG f \paren*{ \x{k+1},\x{k}}}{\dpx}
= -\norm*{\DG f \paren*{ \x{k+1},\x{k}}}^2
\le 0.
\end{align}
In the first equality we used the discrete chain rule, and in the second the form of the scheme itself.
Observe that the proof goes exactly in the same manner as the continuous case~\eqref{pf:gengf}.
Thus the scheme should work as an optimization method.
Note also that the above argument does not depend on the step size $h$, and it can be changed in every step (that will not destroy the decreasing property).

This might appear to achieve our goal, an integration of theories of optimization and numerical analysis.
In fact, this direction of researches has already been tried in numerical analysis community (see~\cref{subsec:prior}).
This challenge is, however, not as easy as it might seem; there remain several difficulties to counter.
First, the DG scheme generally becomes ``fully-implicit'' (in the terminology of numerical analysis), i.e., it requires nonlinear solver in each time step (as well as the existence theorem of numerical solutions; see~\cite{ERRS18}).
This is not preferable in practical optimizations, where people usually use cheap explicit methods.
Second, as pointed above, the proofs of rate estimates in the continuous systems (in~\cref{subsubsec:rates_conti}) demand the inequality of convexity~\eqref{scineq}.
Unfortunately, however, existing DGs generally do not satisfy it; see~\cref{sec:counterexample} for a counterexample.
From these reasons, the numerical analysis challenges have been so far isolated from optimization researches.

The main aim of this paper is to resolve these difficulties, and propose a truly unified framework that gives a bird-eye view on both optimization and numerical analysis research branches (and integrate them).

\subsection{Prior works}\label{subsec:prior}
\paragraph{Continuous systems for optimization.}
Early works on this aspect include the followings. The continuous gradient flow~\eqref{contgf} as a continuous optimization method was discussed in \cite{B75}. Similar arguments were then done for second-order differential equations~\cite{A00, AABR02, CEG09}. An important milestone in this direction was \cite{SBC16}, where a second-order system was shown to correspond to the famous accelerated gradient method of Nesterov~\cite{N83}. This strongly focused or revived people's attention to the direction.

For estimating convergence rates, one of the most powerful approaches nowadays is the one via Lyapunov functional~\cite{SBC16}, which gave rise to numerous successors; an incomplete partial list is~\cite{KV16, APR16, AC17, ACR18, FRV18, D19, SDSJ19, WRJ21} (see also a rich list in \cite{SRR22}). A difficulty in this approach was that the Lyapunov functionals were found only heuristically. Some remedies were considered then~\cite{SRR22, D22}, but their targets are still limited.

\paragraph{Theoretical convergence estimates.}

Theoretical analysis of the convergence rates has long been a central topic of optimization theory. 
The upper and lower bound of the convergence rates are discussed for various optimization methods and function classes in the literature. 
In this paper, we focus on the convergence rate of function values $ f \paren*{ \x{k} } - f^{\star} $, 
while those of $ \norm*{ \nabla f \paren*{ \x{k} } } $ or $ \norm*{ \x{k} - \xs } $ have also been discussed. 
Topics of particular relevance to this study include the lower bound of convergence rates of first-order methods (see \cref{rem:first-order_methods} for the relation between our framework and first-order methods) for convex and strongly convex functions: 
$ \Order{ \frac{1}{k^2} } $ for $L$-smooth and convex functions~(cf.~\cite{N18b}) 
and $ \Order{ \paren*{ 1 - \sqrt{\frac{\mu}{L}} }^{2k} } $ for $L$-smooth and $\mu$-strongly convex functions~\cite{DT22}. 
These lower bounds are tight in the sense that they are achieved by some optimization methods: 
for example, \cite{N83} for convex functions and \cite{VFL18,DT221} for strongly convex functions. 

\paragraph{Unified strategies for optimization methods.}
Unified frameworks with some theoretical guarantees include the followings.
A unified convergence analysis for an abstract optimization method class was discussed in \cite{CL21}, still it requires additional discussion for each specific method. Another systematic approach is the performance estimation problem approach, where optimization methods that are optimal in some sense are obtained by solving semidefinite programs~\cite{DT14}. A framework to analyze and design optimization methods was proposed via the concept of integral quadratic constrains~\cite{LRP16}.

\paragraph{Numerical analysis approaches for optimization.}
The link between continuous optimization methods and numerical methods  has been known from old days, but it was only quite recently that the link has been fully exploited. The possibility of discrete gradients has been investigated~\cite{GMMQS17, CEOR18, ERRS18, MSZ18, RLS18, BRS20, REQS22}. Some typical numerical methods were considered for discretizing continuous systems; Runge--Kutta methods~\cite{ZMSJ18, USM2022cheby}, symplectic methods \cite{BJW18, MJ19, SDSJ19, FSRV20, MJ21}. A numerical analysis view for Lyapunov functional approach was given~\cite{SZ21}. Limitations of continuous optimization systems in view of the stability of numerical methods were revealed~\cite{USM2022rate}.

\subsection{Contributions} \label{subsec:contri}
The main contribution is the new unified framework for the description of gradient-based optimization methods.
The key there is the proposal of {\em weak discrete gradients}, which abstracts various approximations of gradients in optimization methods.
With the concept, we propose abstract methods that are readily incorporated with convergence rate estimates.
The abstract methods include as specific instances not only typical methods in optimization such as the steepest descent and Nesterov's accelerated gradient, but also some recent methods from the numerical analysis community mentioned above. 
After this framework, it basically suffices to consider new possibilities of weak DGs to explore new gradient-based methods.

For the optimization community, the new framework gives a new view on gradient-based methods, which has the following two advantages. First, it reveals a common structure of such methods in terms of {\em weak DG}, an appropriate class of discrete approximations of gradients. Second, the conditions for being a weak DG clarify general sufficient conditions for assuring convergence rates, which have not been explicitly verbalized so far even if they have been partly and implicitly embedded in existing convergence studies.

From the numerical analysis point of view, recent numerical analysis approaches to optimization have been isolated in the optimization literature. 
The new framework gives a missing bridge; now we can catch general optimization methods via the concept of DGs, keeping the strict (original) DGs at hand.

We believe this new framework will serve as a modern working place to explore new gradient-based optimization methods, both for people in optimization and numerical analysis.
In fact, some new concrete optimization methods are immediate; for example, the application of strict DGs ((iv)--(vi) in~\cref{tab:wdg}) to the abstract methods based on the accelerated gradient flows (\cref{thm:cr_ac_conv_d,thm:cr_ac_sc_d}) gives a new class of methods, which achieves the rates in~\cref{tab:wdg}.

\section{Weak discrete gradients and the new unified framework} \label{sec:weakDG}
%
%\subsection{Chain rule and convexity}
%

In this section, we define weak discrete gradients (DGs) and present our new framework.

% In the proof of the convergence rates of continuous systems, except for the Euclidean norm and inner product property, we use only the combination of chain rule, ODE substitution, and convex inequality to calculate the derivative of the Lyapunov function (see \cref{app:proof_conti} for detail).
% Since the ODE substitution corresponds to the scheme substitution in discrete systems, what we need to consider is the discrete counterparts of the chain rule and the convex inequality.

% The first strategy is to use (strict) discrete gradient methods, where the discrete chain rule holds. 
% That is, to discretize ODEs by approximating $\nabla f$ with $\DG f$.
% However, there are two issues with this idea. 
% First, the discrete chain rule is so restrictive that discretization with discrete gradients does not contain existing optimization methods such as the steepest descent method.
% Second, the discrete chain rule and the discrete convex inequality are unlikely to be compatible.
% In fact, the corresponding convex inequality in the proof of the continuous system does not hold for the following typical construction of discrete gradients. 

% \subsection{Existing strict discrete gradients}
% \label{subsec:dg_examples}

%The followings are typical constructions of $\DG f$. For other examples, see~\cite{MQR99}. 

For later use, we list some popular DGs.
When we need to distinguish them from weak DGs, we call them {\em strict DGs}.
\begin{proposition}[Strict discrete gradient]\label{thm:DG}
  The following functions are strict discrete gradients.\\
  Gonzalez discrete gradient $ \nabla_\mathrm{G} f(y,x) $~\cite{G96}:
    \begin{equation}
     \nabla f \paren*{\frac{y+x}{2}}
      + \frac{f(y) - f(x) - \inpr{\nabla f \paren*{\frac{y+x}{2}}}{y-x}}{\normm{y-x}} (y-x).
    \end{equation}
   Itoh--Abe discrete gradient $ \nabla_\mathrm{IA} f (y,x) $~\cite{IA88}:
    \begin{equation}
      \begin{bmatrix}
        \frac{f(y_1,x_2,x_3\dots,x_d) - f(x_1,x_2,x_3,\dots,x_d)}{y_1-x_1}\\
        \frac{f(y_1,y_2,x_3\dots,x_d) - f(y_1,x_2,x_3,\dots,x_d)}{y_2 - x_2}\\
        \vdots\\
        \frac{f(y_1,y_2,y_3,\dots,y_d) - f(y_1,y_2,y_3\dots,x_d)}{y_d-x_d}\\
      \end{bmatrix}.
    \end{equation}
    Average vector field (AVF) $ \nabla_\mathrm{AVF} f (y,x) $~\cite{QM08}:
    \begin{equation}
      \int_0^1 \nabla f(\tau y + (1-\tau) x) \dd \tau.
    \end{equation}
    % \item Sato method~\cite{Sato} (This discrete gradient is not typical but suitable for weak discrete gradient as discussed below.)
\end{proposition}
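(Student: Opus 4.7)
I would verify the two defining properties (the discrete chain rule~\eqref{dchain} and the consistency condition~\eqref{dapprox}) for each of the three candidates, treating the diagonal $y=x$ as a removable singularity when necessary.

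For the Gonzalez discrete gradient, the discrete chain rule is essentially built into its definition: taking the inner product with $y-x$ makes the gradient term cancel, leaving $f(y)-f(x)$. The nontrivial point is consistency at $y=x$, where the second summand has the form $0/0$. The plan is to use a symmetric Taylor expansion around the midpoint $(y+x)/2$, which yields $f(y)-f(x) - \langle \nabla f((y+x)/2), y-x\rangle = O(\normm{y-x}^{3/2} \cdot \|y-x\|)$ (in fact $O(\|y-x\|^3)$ for $f\in C^3$, and it suffices that it is $o(\|y-x\|)$ for continuity). Dividing by $\|y-x\|^2$ and multiplying by $y-x$ gives a term vanishing as $y\to x$, so the continuous extension at the diagonal agrees with $\nabla f(x)$.

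For the Itoh--Abe discrete gradient, I would introduce the intermediate points $z^{(0)}=x$ and, for $i=1,\dots,d$, $z^{(i)} = (y_1,\dots,y_i,x_{i+1},\dots,x_d)$, so that the $i$-th component of $\nabla_\mathrm{IA}f(y,x)$ is $(f(z^{(i)})-f(z^{(i-1)}))/(y_i-x_i)$. The discrete chain rule then follows from a telescoping identity:
\begin{equation*}
\inpr{\nabla_\mathrm{IA} f(y,x)}{y-x} = \sum_{i=1}^d \bigl(f(z^{(i)}) - f(z^{(i-1)})\bigr) = f(y)-f(x).
\end{equation*}
For consistency, each component is a difference quotient in the single variable $y_i$ at the point $(y_1,\dots,y_{i-1},x_i,\dots,x_d)$, and its limit as $y\to x$ is $\partial_i f(x)$, giving $\nabla_\mathrm{IA}f(x,x)=\nabla f(x)$.

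The average vector field is the easiest. The discrete chain rule follows from the fundamental theorem of calculus applied to $\tau \mapsto f(\tau y + (1-\tau)x)$:
\begin{equation*}
\inpr{\nabla_\mathrm{AVF} f(y,x)}{y-x} = \int_0^1 \dt f(\tau y + (1-\tau)x) \dd \tau = f(y)-f(x),
\end{equation*}
and consistency is immediate since $\nabla_\mathrm{AVF} f(x,x)=\int_0^1 \nabla f(x)\dd\tau = \nabla f(x)$. Continuity of all three maps on $\RR^d\times\RR^d$ follows from the continuity of $\nabla f$ together with the removable-singularity analyses above; the main obstacle is precisely this last issue for Gonzalez, while the other two cases are routine.
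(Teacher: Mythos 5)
Your verification is correct, and it is the standard one: the paper itself gives no proof of this proposition but simply cites \cite{G96,IA88,QM08}, so there is nothing to diverge from. Two small points of polish: in the Gonzalez case your remainder bookkeeping is garbled (with the paper's convention $\normm{\cdot}=\norm{\cdot}^2$ the expression $\normm{y-x}^{3/2}\cdot\norm{y-x}$ does not parse as intended), though the conclusion you actually use --- that the numerator is $o(\norm{y-x})$ for $C^1$ functions, hence the correction term vanishes as $y\to x$ --- is right; and for Itoh--Abe the removable singularities occur not only on the diagonal $y=x$ but whenever a single coordinate satisfies $y_i=x_i$, where the $i$-th component extends continuously to $\partial_i f(y_1,\dots,y_{i-1},x_i,\dots,x_d)$ by the mean value theorem.
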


As said in~\cref{sec:intro}, they do not generally satisfy the convex inequality~\eqref{scineq}.
Therefore here we propose a concept of \emph{weak discrete gradients}, which is a relaxed version of DGs. 

%\subsection{Weak discrete gradients}

\begin{definition}[Weak discrete gradient]\label{def:wdg}
  A gradient approximation\footnote{Notice that we use the notation~$\WDG$ here, distinguishing it from~$\DG$ which is reserved as the standard notation for strict discrete gradients in numerical analysis.} $\WDG f \colon \RR^d \times \RR^d \to \RR^d$ is said to be {\it weak discrete gradient of $f$} if there exists $\alpha \ge 0$ and $\beta, \gamma$ with $\beta+\gamma \ge0$ such that for all $x,y,z \in \RR^d$ the following two conditions hold:
  \begin{align}
    f(y) - f(x) &\le \inpr*{\WDG f(y,z)}{y-x} + \alpha \norm{y-z}^2 - \beta\normm{z-x} -\gamma\normm{y-x}, \label{wdgsc} \\
    \WDG f(x,x) &= \nabla f(x).
  \end{align}
  % Furthermore, $\WDG f$ is said to be {\it local weak discrete gradient of $f$} if there exists a neighborhood $D \subseteq \RR^d$ of the optimal solution such that for all $x,y,z \in D$ the above two conditions hold\footnote{Notice that we use the notation~$\WDG$ here, distinguishing it from~$\DG$ which is reserved as the standard notation for strict discrete gradients in numerical analysis.}.
\end{definition}

The condition~\eqref{wdgsc} can be understood in two ways.
First is an understanding as a discrete chain rule in a weaker sense: by substituting $x$ to $z$, we obtain an inequality
\begin{equation}
  f(y) - f(x) \le \inpr*{\WDG f(y,x)}{y-x} + (\alpha -\gamma)\normm{y-x}. \label{wchain}
\end{equation}
Compared to the strict discrete chain rule~\eqref{dchain}, it is weaker in that
it becomes an inequality and allows an error term.
Second is as a weaker discrete convex inequality:
by exchanging $x$ and $y$ and rearranging terms, we obtain another expression
\begin{align}
  f(y) - f(x) - \inpr*{\WDG f(x,z)}{y-x} \ge \beta \normm{y-x} + \gamma \normm{y-z} - \alpha\normm{x-z}. \label{wconv}
\end{align}
% Compared to the strongly convex inequality~\eqref{scineq},
% the gradient $\nabla f(x)$ becomes the discrete gradient $\WDG f(x,z)$ and the squared norm term $\normm{y-x}$ becomes $\normm{y-x}$ and $\normm{y-z}$, which can be interpreted as a squared distance between $y$ and the point where the gradient is evaluated.
% Furthermore, the error term $ - \alpha\normm{x-z} $ is added.
Compared to the strongly convex inequality~\eqref{scineq},
the term $(\mu/2)\normm{y-x}$ is now replaced  with $\beta\normm{y-x}+\gamma\normm{y-z}$, which can be interpreted as a squared distance between $y$ and the point  ($x, z$) where the gradient is evaluated. The term $ - \alpha\normm{x-z} $ is an error term.

Now we list some examples of weak DGs (the proof is left to \cref{append:wDG}). 
Notice that as concrete instances various typical gradient approximations are included.

\begin{theorem}\label{thm:wDG}
Suppose that $ f \colon \RR^d \to \RR$ is a $\mu$-strongly convex function. 
Let {\em (L)} and {\em (SC)} denote the additional assumptions: {\em (L)} $f$ is $L$-smooth, and {\em (SC)} $\mu > 0 $. 
Then, the following functions are weak discrete gradients:
\begin{enumerate}[{\em (i)}]
    \item If $\WDG f(y,x) = \nabla f(x)$ and $f$ satisfies {\em (L)}, then $(\alpha,\beta,\gamma) = (L/2,\mu/2,0)$. \label{EE}
    \item If $\WDG f(y,x) = \nabla f(y)$, then $(\alpha,\beta,\gamma) = (0,0,\mu/2)$.		\label{IE}
    \item If $\WDG f(y,x) = \nabla f(\frac{x+y}{2})$ and $f$ satisfies {\em (L)}, then $(\alpha,\beta,\gamma) = ((L+\mu)/8,\mu/4,\mu/4)$. \label{mid}
    % \item $\WDG f(y,x) = \nabla f(\theta y + (1-\theta) x)$; then $(\alpha,\beta,\gamma) = ()$ \label{interpolation}
    \item If $\WDG f(y,x) = \nabla_{\mathrm{AVF}} f (y,x)$ and $f$ satisfies {\em (L)}, then $(\alpha,\beta,\gamma) = (L/6+\mu/12,\mu/4,\mu/4)$. \label{AVF}
    \item \label{Gon} If $\WDG f(y,x) = \nabla_{\mathrm{G}} f (y,x)$ and $f$ satisfies {\em (L)(SC)}, then $(\alpha,\beta,\gamma) = ((L+\mu)/8 + {(L-\mu)^2}/{16\mu}, {\mu}/{4}, 0)$.
    \item If $\WDG f(y,x) = \nabla_{\mathrm{IA}} f (y,x)$ and $f$ satisfies {\em (L)(SC)}, then $(\alpha,\beta,\gamma) = \paren{{dL^2}/{\mu}-{\mu}/{4}, {\mu}/{2}, -{\mu}/{4}}$. \label{IA}
\end{enumerate}
\end{theorem}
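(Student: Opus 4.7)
The plan is to verify the two defining properties of a weak discrete gradient for each of the six candidates. The consistency condition $\WDG f(x,x) = \nabla f(x)$ is immediate in every case: in (i)--(iii) the argument collapses to $x$; in (iv) the average over $\tau$ trivializes; in (v) the scalar correction factor vanishes after observing that its numerator and denominator both scale with $\|y-x\|^2$; in (vi) the coordinate difference quotients limit to the partial derivatives. Thus the substantive content is inequality~\eqref{wdgsc}.

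For (i)--(iv), the uniform recipe is to combine the $L$-smoothness upper bound on $f(y)$ with the $\mu$-strong convexity lower bound on $f(x)$, both anchored at a reference point that matches the second argument of $\WDG f(y,z)$. Case (i): anchoring both at $z$ gives directly $f(y)-f(x) \le \inpr{\nabla f(z)}{y-x} + (L/2)\|y-z\|^2 - (\mu/2)\|z-x\|^2$. Case (ii): strong convexity at $y$ alone suffices. Case (iii): anchor both at the midpoint $m=(y+z)/2$, then expand the residual $-(\mu/2)\|x-m\|^2$ via the parallelogram identity $\|x-m\|^2 = \tfrac12\|x-y\|^2+\tfrac12\|x-z\|^2-\tfrac14\|y-z\|^2$; the coefficients fall out exactly as $((L+\mu)/8, \mu/4, \mu/4)$. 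Case (iv): exploit the \emph{exact} AVF identity $\inpr{\nabla_{\mathrm{AVF}} f(y,z)}{y-z}=f(y)-f(z)$ by writing $y-x=(y-z)+(z-x)$, reducing the task to controlling $\inpr{\nabla_{\mathrm{AVF}} f(y,z) - \nabla f(z)}{z-x}$; pulling the difference inside the AVF integral, using Lipschitzness of $\nabla f$ with the factor $\tau$ from the integrand, and applying Young's inequality balanced against the $-(\mu/2)\|z-x\|^2$ contribution should produce the stated $(L/6+\mu/12,\mu/4,\mu/4)$.

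Cases (v) and (vi) are more delicate. For Gonzalez, I would decompose $\nabla_{\mathrm{G}} f(y,z) = \nabla f(m) + \lambda(y-z)$ with $m=(y+z)/2$ and $\lambda$ the scalar correction, reuse the midpoint argument of (iii) on the $\nabla f(m)$ part, and bound the correction $-\lambda\inpr{y-z}{y-x}$ by Young's inequality after a uniform estimate on $|\lambda|$ that follows from pinning the Taylor-like remainders of $f(y)$ and $f(z)$ around $m$ between their smoothness and strong-convexity envelopes. The residual squared-norm factor then supplies the extra $(L-\mu)^2/(16\mu)$ contribution to $\alpha$ and lets $\gamma$ vanish by absorbing the inherited $(\mu/4)\|y-x\|^2$. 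For Itoh-Abe, the mean value theorem shows each component equals $\partial_j f(\xi^{(j)})$ at an intermediate point on the $j$-th coordinate axis; bounding $|\partial_j f(\xi^{(j)})-\partial_j f(z)| \le L\|\xi^{(j)}-z\|$ coordinate by coordinate, summing over $j$, and applying Cauchy--Schwarz picks up a $\sqrt{d}$ factor, which after a Young's inequality tuned against $\mu$ yields the $dL^2/\mu$ inside $\alpha$ and forces the negative $\gamma=-\mu/4$.

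The main obstacle is case (v): the $\|y-z\|^{-2}$ denominator in the Gonzalez correction demands a careful uniform bound on $|\lambda|$ via a two-sided comparison of smoothness and strong convexity at $m$, and the specific constant $(L-\mu)^2/(16\mu)$ is sensitive to the Young's inequality parameters chosen to kill the $(\mu/4)\|y-x\|^2$ term. Case (vi) is conceptually straightforward but computationally tedious, and the $d$-dependence is unavoidable given the coordinate-by-coordinate nature of the estimate. Case (iv) also requires some finesse, since a naive Lipschitz-only bound on $\nabla_{\mathrm{AVF}} f(y,z)-\nabla f(z)$ gives a looser constant than $(L/6+\mu/12)$; the tighter value should emerge from integrating the Lipschitz bound against $\tau$ inside the AVF average rather than pulling it outside.
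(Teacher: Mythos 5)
Your treatment of the consistency condition and of cases (i), (ii), (iii) and (v) coincides with the paper's: (i) adds the smoothness bound toward $y$ and the strong-convexity bound toward $x$, both anchored at $z$ (the three-point descent lemma); (iii) anchors at the midpoint and expands $\|m-x\|^2$ by the parallelogram identity; (v) splits off the scalar Gonzalez correction, bounds it uniformly by $(L-\mu)/8$, and kills the inherited $(\mu/4)\|y-x\|^2$ by a weighted AM--GM, exactly as in the paper. Case (vi) is close in spirit, though your mean-value-theorem route yields $\alpha = L/2 + dL^2/\mu$ rather than the stated $dL^2/\mu - \mu/4$; the paper reaches the sharper constants by a telescoping decomposition with a sign case analysis on $(z_k-x_k)/(y_k-z_k)$ and a separate parallelogram-type bound on the cross term $\sum_k |z_k-y_k|\,|z_k-x_k|$.

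The genuine gap is case (iv). Your route --- use the exact AVF chain rule $\langle \nabla_{\mathrm{AVF}} f(y,z), y-z\rangle = f(y)-f(z)$, then control the perturbation $\langle \nabla_{\mathrm{AVF}} f(y,z)-\nabla f(z), z-x\rangle$ --- cannot produce $(L/6+\mu/12,\mu/4,\mu/4)$, for two structural reasons. First, strong convexity is invoked only at $z$, so no $\gamma\|y-x\|^2$ term with $\gamma>0$ can appear. Second, the perturbation reduces to a cross term of the form $\tfrac{L}{2}\|y-z\|\,\|z-x\|$ (whether you integrate the Lipschitz factor $\tau$ inside or outside, you only improve the constant in front), and the only term available to absorb $\|z-x\|^2$ is $-\tfrac{\mu}{2}\|z-x\|^2$; Young's inequality then forces $\alpha$ of order $L^2/\mu$, and the argument collapses entirely when $\mu=0$ --- yet case (iv) assumes only (L), and the theorem explicitly covers $\mu=0$. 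The correct argument is the one you used for (iii), repeated pointwise: write $\langle \nabla_{\mathrm{AVF}} f(y,z), y-x\rangle = \int_0^1 \langle \nabla f(\tau y+(1-\tau)z), y-x\rangle\,d\tau$, apply the three-point bound anchored at $\tau y+(1-\tau)z$ for each fixed $\tau$, and integrate; the constants are precisely $\int_0^1 \tfrac{L}{2}(1-\tau)^2\,d\tau = L/6$, $\int_0^1 \tfrac{\mu}{2}\tau(1-\tau)\,d\tau = \mu/12$, and $\int_0^1 \tfrac{\mu}{2}(1-\tau)\,d\tau = \int_0^1 \tfrac{\mu}{2}\tau\,d\tau = \mu/4$. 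The exactness of the AVF discrete chain rule, which your plan leans on, is never needed here.
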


Note that, for the case (\ref{IE}), the same property holds even when the objective function is not differentiable by using subgradients appropriately (see the end of \cref{append:wDG}).
For the sake of brevity, however, we assume that $f$ is differentiable in this paper. 
Note also that the above statements include the case $\mu=0$, i.e., when $f$ is not strictly convex.

%The last two ((\ref{Gon}) and (\ref{IA})) can be weak DGs only when $f$ is strongly-convex, since otherwise $\alpha$ diverges and the condition~\eqref{wconv} does not make sense.
%This implies some strict DGs may not be adequate for optimization.
% However, they can be local weak DGs. 
% See \cref{append:wDG} for detail.

By using a weak DG, we define an abstract method
\begin{equation}\label{discrgf}
		\frac{\x{k+1} - \x{k} }{ h } = -\WDG f \paren*{ \x{k+1},\x{k}}, \quad \x{0} = x_0
\end{equation}
for the gradient flow~\eqref{contgf}. 
By ``abstract'' we mean that it is a formal formula, and given a concrete weak DG it reduces to a concrete method;
see~\cref{tab:wdg} which summarizes some possible choices.
Observe the abstract method covers many popular methods both from optimization and numerical analysis communities.
The step size $h$ may be selected using line search techniques, 
but for simplicity, we limit our presentation to the fixed step size in this paper (see \cref{rem:stepsize}). 

\begin{remark}\label{rem:first-order_methods}
  Notice that some weak DGs are not directly connected to the original gradient $\nabla f$'s; the Itoh--Abe weak DG (vi) even does not refer the gradient.
  Thus the concrete methods resulting from our framework do not necessarily fall into the so-called ``first-order methods,'' which run in a linear space spanned by the past gradients~\cite{N83}.
  This is why we use the terminology ``gradient-based methods'' in this paper, instead of first-order methods.
\end{remark}

Similarly to above, we can define abstract methods for ~\eqref{accgf} and \eqref{accgfsc}.
They and their theoretical results can be found in~\cref{thm:cr_ac_conv_d,thm:cr_ac_sc_d}. 
%Another note should go to the fact that, depending on the choice of the weak discrete gradient, the concrete resulting scheme becomes ``fully-implicit'' (in the terminology of numerical analysis), i.e., it requires nonlinear solver in each time step (as well as the existence theorem of numerical solutions; see~\cite{ERRS18}).

It is convenient to introduce the next lemma, which is useful to widen the scope of our framework.

\begin{lemma}\label{lem:sum}
Suppose that $f$ can be written as the sum of two functions $f_1, f_2$. 
If $\WDG_1 f_1$ and $\WDG_2 f_2$ are respectively weak discrete gradients of $f_1$ and $f_2$ with parameters $(\alpha_1,\beta_1,\gamma_1)$ and $(\alpha_2,\beta_2,\gamma_2)$, then $\WDG_1 f_1 + \WDG_2 f_2$ is a weak discrete gradient of $f$ with $(\alpha,\beta,\gamma)=(\alpha_1+\alpha_2,\beta_1+\beta_2,\gamma_1+\gamma_2)$.
\end{lemma}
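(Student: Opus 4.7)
The plan is to verify the two defining conditions of a weak discrete gradient directly by adding the corresponding conditions for $\widetilde{\nabla}_1 f_1$ and $\widetilde{\nabla}_2 f_2$. There is essentially no technical obstacle here; the lemma is a linearity statement and the only nontrivial point is to check that the summed parameters still satisfy the sign constraints required by \cref{def:wdg}.

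First I would dispatch the consistency condition $\widetilde{\nabla} f(x,x) = \nabla f(x)$. Since $\widetilde{\nabla}_i f_i(x,x) = \nabla f_i(x)$ for $i = 1,2$ by assumption, adding yields
\begin{equation*}
  (\widetilde{\nabla}_1 f_1 + \widetilde{\nabla}_2 f_2)(x,x) = \nabla f_1(x) + \nabla f_2(x) = \nabla f(x),
\end{equation*}
which is exactly the requirement for $f = f_1 + f_2$.

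Next I would verify the weak discrete chain-rule-type inequality \eqref{wdgsc}. Writing the defining inequality for $\widetilde{\nabla}_i f_i$ with parameters $(\alpha_i, \beta_i, \gamma_i)$ at arbitrary $x, y, z \in \mathbb{R}^d$ and summing over $i = 1,2$ gives
\begin{align*}
  f(y) - f(x)
  &= \bigl(f_1(y) - f_1(x)\bigr) + \bigl(f_2(y) - f_2(x)\bigr)\\
  &\le \inpr*{(\widetilde{\nabla}_1 f_1 + \widetilde{\nabla}_2 f_2)(y,z)}{y-x}
     + (\alpha_1+\alpha_2)\norm{y-z}^2\\
  &\quad - (\beta_1+\beta_2)\norm{z-x}^2 - (\gamma_1+\gamma_2)\norm{y-x}^2,
\end{align*}
which is exactly \eqref{wdgsc} for $\widetilde{\nabla} f := \widetilde{\nabla}_1 f_1 + \widetilde{\nabla}_2 f_2$ with parameters $(\alpha, \beta, \gamma) = (\alpha_1 + \alpha_2, \beta_1 + \beta_2, \gamma_1 + \gamma_2)$.

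Finally I would check the admissibility of these parameters. Since $\alpha_1, \alpha_2 \ge 0$ by assumption, $\alpha = \alpha_1 + \alpha_2 \ge 0$; and since $\beta_i + \gamma_i \ge 0$ for $i = 1,2$, we get $\beta + \gamma = (\beta_1 + \gamma_1) + (\beta_2 + \gamma_2) \ge 0$. This confirms that $\widetilde{\nabla}_1 f_1 + \widetilde{\nabla}_2 f_2$ is indeed a weak discrete gradient of $f$ with the claimed parameters, completing the proof. The only part worth mild care is remembering to verify $\beta + \gamma \ge 0$ rather than the stronger (and false in general) $\beta \ge 0$ and $\gamma \ge 0$ separately.
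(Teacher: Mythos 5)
Your proof is correct and is exactly the direct linearity argument the paper implicitly relies on (the lemma is stated without proof there, being immediate): sum the two instances of \eqref{wdgsc}, sum the consistency conditions, and observe that $\alpha_1+\alpha_2\ge 0$ and $(\beta_1+\gamma_1)+(\beta_2+\gamma_2)\ge 0$ preserve the admissibility constraints. Your closing remark about checking $\beta+\gamma\ge 0$ rather than the signs of $\beta$ and $\gamma$ separately is the right point of care.
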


This lemma allows to think of the following discretization of the gradient flow
\begin{equation}\label{pgd}
  \dpx = -\nabla f_1 \paren*{ \x{k}} - \nabla f_2 \paren*{ \x{k+1} }  % , \quad  u \in \rd f_2 \paren*{ \x{k+1}} 
\end{equation}
within our framework.
Suppose, for exmaple, $f_1$ is $L_1$-smooth and $\mu_1$-strongly convex, and $f_2$ is $\mu_2$-strongly convex.
Then the right-hand side of~\eqref{pgd} is a weak DG with $(\alpha,\beta,\gamma) = (L_1/2,\mu_1/2,\mu_2/2)$.
In this case, the scheme is known as the proximal gradient method or the forward-backward splitting algorithm in optimization (cf.~\cite{BC17b}).
By discretizing the accelerated gradient flows, we can obtain accelerated versions. (Acceleration of the proximal gradient method has been long considered~\cite{BT09,BT09_1}.)%T08

% \begin{table*}[htbp]
% \begin{center}
% \caption{Examples of global weak discrete gradients when $f$ is a $\mu$-strongly convex and $L$-smooth function on $\RR^d$. The numbers in the $\WDG f$ column correspond to the numbers in \cref{thm:wDG}. The line of the proximal gradient method is described in the setting of~\eqref{pgd}. (DG) represents that this weak discrete gradient is a rigorous discrete gradient.}
% \renewcommand{\arraystretch}{1.4}
% \begin{tabular}{c|c|c|cc} \hline
% $\WDG f$ & Optimization method & Numerical method & Rate in \cref{thm:cr_sc_d} & Rate in \cref{thm:cr_PL_d} \\ \hline
% \eqref{EE} & steepest descent & explicit Euler & $\frac{L-\mu}{L + \mu}$ & $ 1 - \frac{\mu}{L} $ \\
% \eqref{IE} & proximal point method & implicit Euler & $0$ & $1-\frac{\mu}{3L}$ \\
% \eqref{EE}+\eqref{IE} & proximal gradient method & (splitting method) & $ \frac{L_1-\mu_1}{L_1+\mu_1+2\mu_2} $ & --- \\
% \eqref{mid} & --- & implicit midpoint rule & $\frac{L-\mu}{L+7\mu}$ & $ 1 - \frac{4\mu}{5L} $ \\
% \eqref{AVF} & --- & AVF (DG) & $\frac{L-\mu}{L+5\mu}$ & $ 1 - \frac{\mu}{L} $ \\
% \eqref{Gon} & --- & Gonzalez method (DG) & $\frac{L^2-4\mu^2}{L^2+4\mu^2}$ & $ 1 - \frac{4\mu}{5L} $ \\
% \eqref{IA} & --- & Itoh--Abe method (DG) & $ \frac{ 4 d^2 L^2 - 3 \mu^2 }{4d^2L^2 - \mu^2} $ & $ 1 - \frac{\mu}{ 2 \sqrt{d} L } $ \\ \hline
% \end{tabular}
% \renewcommand{\arraystretch}{1}
% \label{tab:wdg}
% \end{center}
% \end{table*}

\begin{table*}[htbp]
\begin{center}
\caption{Examples of weak discrete gradients and resulting convergence rates when $f$ is a $\mu$-strongly convex and $L$-smooth function on $\RR^d$. The numbers in the $\WDG f$ column correspond to the numbers in \cref{thm:wDG}. The line of the proximal gradient method is described in the setting of~\eqref{pgd}. (DG) represents that this weak discrete gradient is a strict discrete gradient. The convergence rates written in the table are for the best choice of step sizes in \cref{thm:cr_sc_d,thm:cr_ac_sc_d}.}
\scalebox{0.95}{
\renewcommand{\arraystretch}{1.4}
\begin{tabular}{c|c|c|cc} \hline
          & Optimization method            &                  & \multicolumn{2}{c} {Convergence rates} \\
$\WDG f$ & (in the case of~\eqref{contgf}) & Numerical method & \cref{thm:cr_sc_d} & \cref{thm:cr_ac_sc_d} \\ \hline
\eqref{EE} & steepest descent & explicit Euler & $\Order{ \paren*{ 1 - 2 \frac{\mu}{L + \mu}}^k}$ & $ \Order{ \paren*{ 1 - \sqrt{\frac{\mu}{L}} }^k} $ \\
\eqref{IE} & proximal point method & implicit Euler & $0$ & $0$ \\
\eqref{EE}+\eqref{IE} & proximal gradient method & (splitting method) & $ \Order{ \paren*{ 1 - 2 \frac{\mu_1 + \mu_2 }{L_1+\mu_1+2\mu_2} }^k} $ & $ \Order{ \paren*{ 1 - \sqrt{ \frac{ \mu_1 + \mu_2 }{L_1 + \mu_2} }}^k} $ \\
\eqref{mid} & --- & implicit midpoint & $ \Order{ \paren*{ 1 - 8 \frac{\mu}{L+7\mu} }^k}$ & $ \Order{ \paren*{ 1 - \sqrt{\frac{4\mu}{L+3\mu}}}^k } $ \\
\eqref{AVF} & --- & AVF (DG) & $ \Order{ \paren*{ 1 - 6 \frac{\mu}{L+5\mu} }^k}$ & $ \Order{ \paren*{ 1 - \sqrt{\frac{3\mu}{L+2\mu}} }^k} $ \\
\eqref{Gon} & --- & Gonzalez (DG) & $ \Order{ \paren*{ 1 - 8 \frac{\mu^2}{L^2+4\mu^2} }^k} $ & $ \Order{ \paren*{ 1 - \sqrt{\frac{4\mu^2}{L^2+3\mu^2}} }^k} $ \\
\eqref{IA} & --- & Itoh--Abe (DG) & $ \Order{ \paren*{ 1 - 2 \frac{ \mu^2 }{4d^2L^2 - \mu^2} }^k} $ & $ \Order{ \paren*{ 1 - \sqrt{\frac{\mu^2}{ 4dL^2-2\mu^2 }} }^k} $ \\ \hline
\end{tabular}
\renewcommand{\arraystretch}{1}}
\label{tab:wdg}
\end{center}
\end{table*}

\section{Convergence rates of the abstract optimization methods}
\label{sec:rates_discrete}

In this section, we establish the discrete counterparts of \cref{thm:cr_conv,thm:cr_sc,thm:cr_ac_conv,thm:cr_ac_sc}. 
See \cref{app:proof_discrete} for the proofs. 

Part of the results in~\cref{subsec:gf,subsec:agf},
more precisely the best convergence rates in strongly convex cases, are summarized in \cref{tab:wdg} for readers' convenience.
For convex cases, the order of the convergence rate does not depend on the choice of weak discrete gradients (DGs); 
the coefficients depend on the inverse of the upper bound of the step size. 
In other words, the smaller the value of $\alpha$ is, the better the method is. 

\begin{remark}\label{rem:stepsize}
In the following theorems, we assume that the step size $h$ is fixed for brevity. 
However, they can be easily extended to the case with variable step sizes. 
%In that case, to use the Lyapunov argument as is, the step size condition should be satisfied at every step. 
In that case, in order to keep the Lyapunov argument alive, it is sufficient to assure that basically all the step sizes respect the step size condition (finite number of violations would not affect the theory). 
In addition, it must be separately guaranteed that there is a positive lower bound of the step sizes. %the step sizes are not too small (more precisely, there is a positive lower bound). 
\end{remark}

% \begin{table*}[htbp]
% \begin{center}
% \caption{Convergence rates shown in \cref{thm:cr_sc_d,thm:cr_PL_d}. The ratio $\kappa \coloneqq L/\mu$ is called the condition number of $f$. }
% \renewcommand{\arraystretch}{1.3}
% \begin{tabular}{c|c|c|c} 
% \hline
% $\WDG f$ & Numerical method & strongly convex & P{\L} \\ \hline
% \eqref{EE} & explicit Euler & $ \frac{\kappa - 1}{\kappa + 1} $ & $ 1 - \frac{1}{\kappa} $ \\ % SCの方が良いレート
% \eqref{IE} & implicit Euler & $ 0 $ & $ 1 - \frac{2}{3\kappa} $ \\ %SCの方が良いレート
% \eqref{mid} & implicit midpoint rule & $ \frac{ \kappa - 1 }{\kappa + 7} $ & $ 1 - \frac{4}{5 \kappa} $ \\ % SCの方が良いレート
% \eqref{AVF} & AVF (DG) & $ \frac{ \kappa - 1 }{\kappa + 5} $ & $ 1 - \frac{1}{\kappa} $ \\ % SCの方が良いレート
% \eqref{Gon} & Gonzalez method (DG) & $ \frac{ \kappa^2 -4 }{\kappa^2 + 4} $ & $ 1 - \frac{4}{5\kappa} $ \\ % 未確認 (レートを書き換えたため)
% \eqref{IA} & Itoh--Abe method (DG) & $ \frac{ 4d \kappa^2 -3 }{4 d \kappa^2 -1} $ & $ 1 - \frac{1}{2 \sqrt{d} \kappa } $ % 未確認 (レートを書き換えたため)
% \\ \hline
% \end{tabular}
% \renewcommand{\arraystretch}{1}
% \label{tab:wdg_rates}
% \end{center}
% \end{table*}

\subsection{For the abstract method based on the gradient flow} \label{subsec:gf}

The abstract method was given in~\eqref{discrgf}.

\begin{theorem}[Convex case]\label{thm:cr_conv_d}
Let $\WDG f$ be a weak discrete gradient of $f$ and suppose that $\beta \ge 0, \gamma \ge 0$. 
Let $f$ be a convex function which additionally satisfies the necessary conditions that the weak DG requires.
%Let $\x{k} \in \RR^d$ ($k=0,1,2,\ldots$) be the solution of~\eqref{discrgf}.
Let $ \setE*{ \x{k} } $ be the sequence given by \eqref{discrgf}. 
Then, under the step size condition $h \le 1/(2\alpha)$, the sequence satisfies
\begin{equation}
f\paren*{\x{k}} - f^\star \le \frac{\normm{x_0 - x^\star}}{2kh}.
\end{equation}
\end{theorem}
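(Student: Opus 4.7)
The plan is to mimic the Lyapunov argument for the continuous \cref{thm:cr_conv}, which uses $\mathcal{E}(t) = t(f(x(t))-f^\star) + \tfrac12\|x(t)-x^\star\|^2$ and establishes $\dot{\mathcal{E}} \le 0$. In the discrete setting the natural candidate is
\[
\mathcal{E}_k := kh\,(f(x^{(k)})-f^\star) + \tfrac12\|x^{(k)}-x^\star\|^2,
\]
and the entire proof reduces to showing the monotonicity $\mathcal{E}_{k+1} \le \mathcal{E}_k$; iterating then yields $kh(f(x^{(k)})-f^\star) \le \mathcal{E}_k \le \mathcal{E}_0 = \tfrac12\|x_0-x^\star\|^2$, which is exactly the claimed estimate.

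To prove monotonicity I would split $\mathcal{E}_{k+1}-\mathcal{E}_k$ into three pieces: $kh(f(x^{(k+1)})-f(x^{(k)}))$, $h(f(x^{(k+1)})-f^\star)$, and $\tfrac12(\|x^{(k+1)}-x^\star\|^2-\|x^{(k)}-x^\star\|^2)$. The last is rewritten using the identity $\|x^{(k+1)}-x^\star\|^2-\|x^{(k)}-x^\star\|^2 = \|x^{(k+1)}-x^{(k)}\|^2 + 2\langle x^{(k+1)}-x^{(k)},\, x^{(k)}-x^\star\rangle$ together with the scheme $x^{(k+1)}-x^{(k)} = -h\,\WDG f(x^{(k+1)},x^{(k)})$. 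The ``bulk'' piece is bounded by the weak chain rule~\eqref{wchain} applied with $(y,x)=(x^{(k+1)},x^{(k)})$, while the ``new-time'' piece is bounded by the full weak-DG inequality~\eqref{wdgsc} applied with $(y,x,z)=(x^{(k+1)}, x^\star, x^{(k)})$. The latter produces exactly the inner product $h\langle \WDG f, x^{(k)}-x^\star\rangle$ needed to cancel the cross term coming from the squared-norm expansion.

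After this cancellation the residual is a coefficient of $\|\WDG f(x^{(k+1)},x^{(k)})\|^2$ of the form $h^2[kh(\alpha-\gamma)-k+\alpha h-\tfrac12]$, plus the contributions $-h\beta\|x^{(k)}-x^\star\|^2-h\gamma\|x^{(k+1)}-x^\star\|^2$, which are nonpositive under the standing hypothesis $\beta,\gamma\ge0$ and can simply be discarded. The bracketed coefficient is nonpositive whenever both $h\alpha\le\tfrac12$ and $h(\alpha-\gamma)\le1$ hold; since $\gamma\ge0$, both inequalities follow at once from the single step-size condition $h\le1/(2\alpha)$.

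The delicate point, and the one I expect to require the most care, is choosing the third argument $z$ of the weak DG consistently in the two uses (namely $z=x^{(k)}$ in both); any other choice leaves an uncanceled inner product that cannot be absorbed into squared-norm quantities, so the form of the scheme~\eqref{discrgf} genuinely dictates this alignment. Once it is made, the remainder is routine algebra in $h$.
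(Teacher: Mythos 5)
Your proof is correct and takes essentially the same route as the paper's: the identical Lyapunov function $E^{(k)} = kh\paren*{f\paren*{\x{k}}-f^\star}+\frac{1}{2}\normm{\x{k}-x^\star}$, the same two applications of~\eqref{wdgsc} (once as a chain rule with $z=\x{k}$, once as a convexity inequality with $(y,x,z)=(\x{k+1},x^\star,\x{k})$), and the same step-size analysis. The only cosmetic difference is that you expand the squared norm around $\x{k}$ instead of $\x{k+1}$, so the cancellation of the cross term is not quite exact and leaves an extra $-h^2\normm{\WDG f}$ in your residual bracket --- but you have accounted for it correctly, and the bracket $kh(\alpha-\gamma)-k+\alpha h-\tfrac12$ is indeed nonpositive under $h\le 1/(2\alpha)$ and $\gamma\ge 0$.
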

\begin{remark}
Convexity of $f$ is translated to the assumption $\beta \ge 0, \gamma \ge 0$. 
The proof is done assuming $\beta=\gamma=0$, which corresponds to $\mu=0$ in the continuous setting.
\end{remark}

\begin{remark}
Since \cref{thm:cr_conv_d} gives the convergence rate of the abstract method~\eqref{discrgf}, 
it contains various individual convergence theorems. 
This is illustrated below using the proximal gradient method~\eqref{pgd} as an example. 
Suppose that $f_1$ is $L_1$-smooth and convex, and $f_2$ is convex. 
Then, $ \WDG f (y,x) = \nabla f_1 (x) + \nabla f_2 (y) $ is a weak discrete gradient with the parameter $ ( \alpha, \beta , \gamma) = (L_1/2,0,0) $ due to \cref{thm:wDG,lem:sum}. 
Therefore, the proximal gradient method~\eqref{pgd} satisfies the assumption of \cref{thm:cr_conv_d} and thus the convergence rate is $ \Order{ \frac{1}{k} } $ under the step size condition $ h \le \frac{1}{L_1} $. 
\end{remark}

\begin{theorem}[Strongly convex case]\label{thm:cr_sc_d}
	Let $\WDG f$ be a weak discrete gradient of $f$ and suppose that $\beta + \gamma >0$.
  Let $f$ be a strongly convex function which additionally satisfies the necessary conditions that the weak DG requires.
 %Let $\x{k} \in \RR^d$ ($k=0,1,2,\ldots$) be the solution of~\eqref{discrgf}.
 Let $ \setE*{ \x{k} } $ be the sequence given by \eqref{discrgf}. 
	Then, under the step size condition $h \le 1/(\alpha+\beta)$, the sequence satisfies
	\begin{equation}
		f\paren*{\x{k}} - f^\star \le \paren*{1-\frac{2(\beta+\gamma) h}{1+2\gamma h}}^k \normm{x_0 - x^\star}.
	\end{equation}
        In particular, the sequence satisfies 
        \begin{equation}
		f\paren*{\x{k}} - f^\star \le \paren*{1-\frac{2(\beta+\gamma)}{\alpha+\beta+2\gamma}}^k \normm{x_0 - x^\star},
	\end{equation}
        when the optimal step size $h=1/(\alpha+\beta)$ is employed. 
\end{theorem}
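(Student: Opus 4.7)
The plan is to adapt the continuous Lyapunov argument of \cref{thm:cr_sc} to the discrete setting, using $E_k := \normm{x^{(k)} - x^\star}$ as the primary energy and establishing a one-step contraction of the form $V_{k+1} \le r V_k$ with $r = (1-2\beta h)/(1+2\gamma h) = 1 - 2(\beta+\gamma) h/(1+2\gamma h)$, from which the claimed estimate follows by iteration.

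The core step is to apply the weak DG inequality \eqref{wdgsc} with the substitution $(y,x,z) = (x^{(k+1)}, x^\star, x^{(k)})$, which places $f(x^{(k+1)}) - f^\star$ on the left. Using the scheme \eqref{discrgf} to identify $\WDG f(x^{(k+1)}, x^{(k)}) = -(x^{(k+1)} - x^{(k)})/h$ and expanding the resulting inner product via the polarization identity $2\inpr{a-b}{a-c} = \normm{a-b} + \normm{a-c} - \normm{b-c}$ with $(a,b,c) = (x^{(k+1)}, x^{(k)}, x^\star)$, I expect to arrive at the one-step estimate
\begin{equation*}
2h\bigl(f(x^{(k+1)}) - f^\star\bigr) + (1 + 2\gamma h)\, E_{k+1} \le (1 - 2\beta h)\, E_k + (2\alpha h - 1)\, \normm{x^{(k+1)} - x^{(k)}},
\end{equation*}
which is the discrete analogue of the continuous dissipation for $\e{\mu t}\normm{x - x^\star}$.

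The main obstacle is controlling the residual $(2\alpha h - 1)\normm{x^{(k+1)} - x^{(k)}}$: the step size condition $h \le 1/(\alpha+\beta)$ does not on its own make $2\alpha h - 1$ non-positive (and indeed the weak DG axioms only yield $\alpha \ge \beta$, hence $1/(\alpha+\beta) \ge 1/(2\alpha)$). To eliminate this residual I would invoke \eqref{wdgsc} a second time, now with $(y,x,z) = (x^{(k+1)}, x^{(k)}, x^{(k)})$, obtaining
\begin{equation*}
f(x^{(k+1)}) - f(x^{(k)}) \le (\alpha - \gamma - 1/h)\, \normm{x^{(k+1)} - x^{(k)}}.
\end{equation*}
The coefficient $\alpha - \gamma - 1/h$ is non-positive under the step size condition combined with $\beta + \gamma \ge 0$ (which gives $h \le 1/(\alpha+\beta) \le 1/(\alpha-\gamma)$), so this inequality both establishes monotone decrease of $f(x^{(k)})$ and permits trading $\normm{x^{(k+1)} - x^{(k)}}$ against $f(x^{(k)}) - f(x^{(k+1)})$. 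Taking an appropriate non-negative linear combination of the two inequalities cancels the residual and leaves a clean recursion $V_{k+1} \le r V_k$ on a Lyapunov $V_k$ that combines $E_k$ with a multiple of $f(x^{(k)}) - f^\star$; iterating then yields the claimed bound, and the second assertion is immediate upon substituting $h = 1/(\alpha+\beta)$ into $r$.
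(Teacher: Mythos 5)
Your proposal is correct and rests on the same two uses of the weak DG inequality as the paper's proof --- once at $(y,x,z)=(\x{k+1},\xs,\x{k})$ as a convexity inequality at the optimum (your inequality (A) is exactly what the paper obtains by combining \cref{lem:cos} with \eqref{wdgsc}), and once at $(y,x,z)=(\x{k+1},\x{k},\x{k})$ as a chain rule --- but you absorb the residual $\bigl(2\alpha h-1\bigr)\normm{\x{k+1}-\x{k}}$ differently. The paper keeps the Lyapunov weight fixed at $\beta+\gamma$ and instead derives an auxiliary bound on $\delta^+\normm{\x{k}-\xs}$ (its \eqref{gdscnorm}) which it substitutes back to convert $\normm{\x{k}-\xs}$ into $\normm{\x{k+1}-\xs}$ with the gain $1/(1-2\beta h)$; you trade the residual against $f(\x{k})-f(\x{k+1})$ using the descent inequality (B), which forces an $h$-dependent weight $\lambda=\frac{(2\alpha h-1)h}{1-(\alpha-\gamma)h}$ on the function-value part of the Lyapunov function. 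Your route does close with the claimed rate: the condition for the combined recursion $V_{k+1}\le rV_k$ with $r=\frac{1-2\beta h}{1+2\gamma h}$ is $\lambda(\beta+\gamma)\le 1-2\beta h$, and the difference of the two sides factors as $(1-(\alpha+\beta)h)(1+2\gamma h)\ge 0$ (nonnegativity of the second factor follows from $\alpha\ge\beta\ge-\gamma$), which is precisely where $h\le 1/(\alpha+\beta)$ enters; this final algebraic verification is the one step you left unchecked and should be written out. One further consistency check in your favor: at $h=1/(\alpha+\beta)$ your weight satisfies $\frac{1+2\gamma h}{2h+\lambda}=\beta+\gamma$, so your Lyapunov function coincides with the paper's there. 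Note that, like the paper's own argument, your proof produces the prefactor $f(x_0)-f^\star+(\beta+\gamma)\normm{x_0-\xs}$ rather than the bare $\normm{x_0-\xs}$ appearing in the theorem statement, so you are not losing anything relative to the published proof on that point.
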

% \begin{remark}
%   Strongly convexity of $f$ is translated to the assumption $\beta + \gamma > 0$. 
%   The rate depends on the choice of $h$, and with $h=1/(\alpha+\beta)$ the rate becomes optimal:
%   \begin{equation}
%     \paren*{1-\frac{2(\beta+\gamma)}{\alpha+\beta+2\gamma}}^k.
%   \end{equation}
%   %In the proof, the destination of the transformation of $\dptE$ is not straightforwardly consistent with the continuous case, but it is consistent in the limit of zero step size.
% \end{remark}

\subsection{For the abstract methods based on the accelerated gradient flows}\label{subsec:agf}
We consider abstract methods with weak DGs based on the accelerated gradient flows~\eqref{accgf} and~\eqref{accgfsc}, which will be embedded in the theorems below.
Here we note one thing: when using \eqref{wdgsc} as an approximation of the chain rule, we can determine $z$ independently of $x$ and $y$, which gives us some degrees of freedom.
In the following theorems we show some choices of $\z{k}$, which were found in the proof so that it can be easily calculated from known values and the Lyapunov function decreases monotonically.

\begin{theorem}[Convex case]\label{thm:cr_ac_conv_d}
Let $\WDG f$ be a weak discrete gradient of $f$ and suppose that $\beta \ge 0,\gamma \ge 0$. 
Let $f$ be a convex function which additionally satisfies the necessary conditions that the weak DG requires.
 %Let $\paren*{ \x{k},\vv{k}} \in \RR^d \times \RR^d$ ($k=0,1,2,\ldots$) the numerical solution of \eqref{accgf} obtained by
 Let $ \setE*{ \paren*{ \x{k}, \vv{k} } } $ be the sequence given by 
	\begin{equation}
		\simulparen{
			\begin{aligned}
				\dpx &= \frac{\delp A_{k}}{A_k}\paren*{\vv{k+1}-\x{k+1}}, \\
				\dpv &= -\frac{\delp A_k}{4} \WDG f\paren*{\x{k+1},\z{k}},\\
        \frac{\z{k}-\x{k}}{h} &= \frac{\delp A_k}{A_{k+1}}\paren*{\vv{k}-\x{k}}
			\end{aligned}
		}
	\end{equation}
  with $\paren*{ \x{0}, \vv{0}} = (x_0,v_0)$,
  where $A_k := A(kh)$.
  Then if $A_k = (kh)^2$ and $h\le 1/\sqrt{2\alpha}$, the sequence satisfies
	\begin{equation}
		f\paren*{\x{k}} - f^\star \le \frac{2\norm*{x_0 - x^\star}^2}{A_k}.
	\end{equation}
\end{theorem}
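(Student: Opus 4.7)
The plan is to execute the discrete analogue of the Lyapunov argument behind \cref{thm:cr_ac_conv}. Specifically, I would define the discrete Lyapunov sequence
$$\mathcal{E}_k := A_k \paren*{f(\x{k}) - f^\star} + 2\normm{\vv{k} - x^\star},$$
and aim to establish the one-step decrease $\mathcal{E}_{k+1} \le \mathcal{E}_k$. Because $A_0 = 0$, this telescopes to $\mathcal{E}_k \le \mathcal{E}_0 = 2\normm{v_0 - x^\star}$, whence dividing by $A_k$ yields the advertised rate (the statement implicitly takes $v_0 = x_0$, mirroring the continuous counterpart in \cref{thm:cr_ac_conv}).

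Next I would expand $\mathcal{E}_{k+1} - \mathcal{E}_k = A_k[f(\x{k+1}) - f(\x{k})] + h\delp A_k[f(\x{k+1}) - f^\star] + 2\normm{\vv{k+1} - x^\star} - 2\normm{\vv{k} - x^\star}$ and apply the weak DG inequality~\eqref{wdgsc} with $y = \x{k+1}$, $z = \z{k}$ twice, once with $x = \x{k}$ and once with $x = x^\star$. Weighting these by $A_k$ and $h\delp A_k$ respectively (so that the two $\alpha$-contributions combine with coefficient $A_k + h\delp A_k = A_{k+1}$) and discarding the $-\beta\normm{\cdot}$ and $-\gamma\normm{\cdot}$ pieces (nonpositive because $\beta,\gamma \ge 0$), the two resulting inner-product terms combine, via the first scheme equation, into $h\delp A_k \inpr{\WDG f(\x{k+1},\z{k})}{\vv{k+1} - x^\star}$. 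Substituting $\WDG f(\x{k+1},\z{k}) = -\tfrac{4}{h\delp A_k}(\vv{k+1} - \vv{k})$ from the second scheme equation and invoking the polarization identity $-4\inpr{\vv{k+1} - \vv{k}}{\vv{k+1} - x^\star} = 2\normm{\vv{k} - x^\star} - 2\normm{\vv{k+1} - x^\star} - 2\normm{\vv{k+1} - \vv{k}}$ then cancels the $\vv{\cdot}$ telescope in $\mathcal{E}_{k+1} - \mathcal{E}_k$, leaving
$$\mathcal{E}_{k+1} - \mathcal{E}_k \le -2\normm{\vv{k+1} - \vv{k}} + A_{k+1}\alpha\normm{\x{k+1} - \z{k}} + (\text{nonpositive}).$$

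The main obstacle is then to dominate the error term $A_{k+1}\alpha\normm{\x{k+1}-\z{k}}$ by $2\normm{\vv{k+1}-\vv{k}}$, and this is exactly where the prescribed choice of $\z{k}$ is decisive. Rewriting the first and third scheme equations as the convex combinations $\x{k+1} = \frac{A_k}{A_{k+1}}\x{k} + \frac{h\delp A_k}{A_{k+1}}\vv{k+1}$ and $\z{k} = \frac{A_k}{A_{k+1}}\x{k} + \frac{h\delp A_k}{A_{k+1}}\vv{k}$ (using $1 - \frac{h\delp A_k}{A_{k+1}} = \frac{A_k}{A_{k+1}}$) gives the clean identity $\x{k+1} - \z{k} = \frac{h\delp A_k}{A_{k+1}}(\vv{k+1} - \vv{k})$. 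This reduces monotonicity of $\mathcal{E}_k$ to the scalar inequality $\alpha h^2 (\delp A_k)^2 \le 2 A_{k+1}$. With $A_k = (kh)^2$, so that $\delp A_k = h(2k+1)$ and $A_{k+1} = (k+1)^2 h^2$, the elementary estimate $(2k+1)^2 \le 4(k+1)^2$ (valid for every $k \ge 0$) collapses the requirement to $2\alpha h^2 \le 1$, i.e., exactly the stated step-size bound $h \le 1/\sqrt{2\alpha}$. Beyond this tuning, the remainder is a mechanical transcription of the continuous computation.
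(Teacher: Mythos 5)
Your proposal is correct and follows essentially the same route as the paper's proof: the same Lyapunov function $E^{(k)} = A_k\paren*{f(\x{k})-f^\star}+2\normm{\vv{k}-x^\star}$, the same two applications of~\eqref{wdgsc} (as chain rule and as convexity) whose $\alpha$-contributions combine with coefficient $A_{k+1}$, and the same key identity $\x{k+1}-\z{k} = \frac{A_{k+1}-A_k}{A_{k+1}}\paren*{\vv{k+1}-\vv{k}}$ reducing everything to $\alpha(A_{k+1}-A_k)^2\le 2A_{k+1}$, which for $A_k=(kh)^2$ gives exactly $h\le 1/\sqrt{2\alpha}$. Your side remark that the stated bound implicitly takes $v_0=x_0$ (since the argument actually yields $2\normm{v_0-x^\star}/A_k$) is also accurate.
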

% \begin{remark}
%   There are multiple possible ways to determine $\z{k}$. 
%   In the theorem, we describe the one that can be calculated from $\x{k}$ and $\vv{k}$ only.
% \end{remark}

\begin{theorem}[Strongly convex case]\label{thm:cr_ac_sc_d}
    Let $\WDG f$ be a weak discrete gradient of $f$ and suppose that $\beta+\gamma>0$. 
     Let $f$ be a strongly convex function which additionally satisfies the necessary conditions that the weak DG requires.
    %Let $\paren*{\x{k},\vv{k}} \in \RR^d \times \RR^d$ ($k=0,1,2,\ldots$) be the numerical solution of \eqref{accgfsc} obtained by
    Let $ \setE*{ \paren*{ \x{k}, \vv{k} } } $ be the sequence given by 
	\begin{equation}
		\simulparen{
				\dpx &= \sqrt{2(\beta+\gamma)}\paren*{\vksxks},\\
				\dpv &= \sqrt{2(\beta+\gamma)} \left( \frac{\beta}{\beta+\gamma}\z{k} + \frac{\gamma}{\beta+\gamma}\x{k+1} -\vv{k+1}-\frac{\wdgz}{2(\beta+\gamma)} \right), \\
        \frac{\z{k} - \x{k}}{h} &= \sqrt{2(\beta+\gamma)}\paren*{ \x{k} + \vv{k} - 2\z{k}}
		}
	\end{equation}
  with $\paren*{\x{0}, \vv{0}}= (x_0,v_0)$.
  Then if 
  \begin{equation}
    h \le \overline{h} \coloneqq \frac{1}{\sqrt{2}(\sqrt{\alpha+\gamma} - \sqrt{\beta+\gamma})},
  \end{equation}
  the sequence satisfies
	\begin{align}
		f \paren*{ \x{k} } - f^\star 
        \le \paren*{ 1 + \sqrt{2(\beta+\gamma)}h}^{-k}\paren*{f(x_0) - f^\star + \beta \normm{v_0 - \xs}}.
	\end{align}
  In particular, the sequence satisfies
  \begin{align}
		f \paren*{ \x{k} } - f^\star 
        \le \paren*{1 - \sqrt{\frac{\beta+\gamma}{\alpha+\gamma}}}^k \paren*{f(x_0) - f^\star + \beta \normm{v_0 - \xs}}, 
  \end{align}
  when the optimal step size $ h = \overline{h}$ is employed. 
\end{theorem}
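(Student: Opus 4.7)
The plan is to mirror the continuous-time Lyapunov argument of \cref{thm:cr_ac_sc} at the discrete level. Set $\omega := \sqrt{2(\beta+\gamma)}$, which is the discrete analogue of $\sqrt{\mu}$ under the identification $\mu/2 \leftrightarrow \beta+\gamma$, and introduce the candidate discrete Lyapunov sequence
\[
\mathcal{E}_k := f(\x{k}) - f^\star + \beta\normm{\vv{k}-\xs}.
\]
The target is the one-step contraction $(1+\omega h)\,\mathcal{E}_{k+1} \le \mathcal{E}_k$. Iterating gives $\mathcal{E}_k \le (1+\omega h)^{-k}\mathcal{E}_0$, which yields the main rate bound since $f(\x{k})-f^\star \le \mathcal{E}_k$ and $\mathcal{E}_0 = f(x_0)-f^\star + \beta\normm{v_0-\xs}$; the ``in particular'' statement follows by substituting $h = \overline h$ and using the identity $1+\omega\overline h = \sqrt{\alpha+\gamma}/(\sqrt{\alpha+\gamma}-\sqrt{\beta+\gamma})$.

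To establish the one-step contraction, I would apply the weak-DG inequality \eqref{wdgsc} twice at $(y,z) = (\x{k+1},\z{k})$: once with $x = \x{k}$, and once with $x = \xs$. Combining them with weights $1$ and $\omega h$ produces an upper bound on $(1+\omega h)f(\x{k+1}) - f(\x{k}) - \omega h f^\star$ whose principal term is $\inpr{\wdgz}{(\x{k+1}-\x{k})+\omega h(\x{k+1}-\xs)}$. The $x$-update of the scheme is designed precisely so that $(\x{k+1}-\x{k})+\omega h(\x{k+1}-\xs) = h\omega(\vv{k+1}-\xs)$, collapsing this to $h\omega\inpr{\wdgz}{\vv{k+1}-\xs}$. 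The $v$-update, in turn, rearranges into the explicit linear identity
\[
h\,\wdgz = 2h\beta(\z{k}-\xs) + 2h\gamma(\x{k+1}-\xs) - 2h(\beta+\gamma)(\vv{k+1}-\xs) - \omega(\vv{k+1}-\vv{k}),
\]
so that polarizing each resulting inner product via $2\inpr{a-c}{b-c} = \normm{a-c}+\normm{b-c}-\normm{a-b}$ decomposes the expression into squared norms. In particular, the displacement piece $-\omega\inpr{\vv{k+1}-\vv{k}}{\vv{k+1}-\xs}$ contributes $(\omega^2/2)(\normm{\vv{k}-\xs}-\normm{\vv{k+1}-\xs}-\normm{\vv{k+1}-\vv{k}})$, which supplies the telescoping $\normm{\vv{\cdot}-\xs}$ structure that drives the Lyapunov decrement; the $-\omega h\beta\normm{\z{k}-\xs}$ and $-\omega h\gamma\normm{\x{k+1}-\xs}$ error terms from \eqref{wdgsc} are meanwhile designed to cancel the positive $+h\omega\beta\normm{\z{k}-\xs}$ and $+h\omega\gamma\normm{\x{k+1}-\xs}$ cross contributions from the polarization step.

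What remains is a residual quadratic form in the displacements $\x{k+1}-\z{k}$, $\z{k}-\x{k}$, $\x{k+1}-\vv{k+1}$, $\z{k}-\vv{k+1}$, $\vv{k+1}-\vv{k}$, and $\x{k+1}-\x{k}$, with coefficients depending on $\alpha,\beta,\gamma,h$. The auxiliary update $\z{k}-\x{k} = h\omega(\x{k}+\vv{k}-2\z{k})$ is tailored so that, when combined with the $x$- and $v$-updates, all these displacements become tightly coupled to essentially two independent directions (roughly $\vv{k+1}-\x{k+1}$ and $\x{k}+\vv{k}-2\z{k}$), reducing the sign check to a scalar quadratic inequality in $h$. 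The main obstacle, and the genuinely delicate heart of the proof, is then to verify that this residual form is non-positive exactly under $h \le \overline h$: the threshold $\overline h = 1/(\sqrt{2}(\sqrt{\alpha+\gamma}-\sqrt{\beta+\gamma}))$ should emerge as the positive root of a quadratic whose discriminant is controlled by the gap $\sqrt{\alpha+\gamma}-\sqrt{\beta+\gamma}$. Once this non-positivity is secured via completion of squares, the one-step contraction---and hence the theorem---follows.
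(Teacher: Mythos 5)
You follow the same route as the paper: a discrete Lyapunov argument proving a one-step contraction $(1+\omega h)\mathcal{E}_{k+1}\le\mathcal{E}_k$ by invoking \eqref{wdgsc} twice at $(y,z)=(\x{k+1},\z{k})$ --- once against $x=\x{k}$ as a chain rule and once against $x=\xs$ as a convexity inequality --- then substituting the scheme and polarizing the inner products. Your two structural identities (the collapse of $(\x{k+1}-\x{k})+\omega h(\x{k+1}-\xs)$ to $h\omega(\vv{k+1}-\xs)$, and the rearrangement of the $v$-update) are exactly the mechanism of the paper's proof, and your weighted combination with weights $1$ and $\omega h$ is algebraically equivalent to the paper's estimate $\delta^+\tE{k}\le-\omega\tE{k+1}$. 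One point needs correction, however: the coefficient of $\normm{\vv{k}-\xs}$ in the Lyapunov function must be $\beta+\gamma$, not $\beta$. Your own telescoping computation forces this: after polarization the coefficient of $\normm{\vv{k+1}-\xs}$ is $\omega h\beta+\omega h\gamma-2\omega h(\beta+\gamma)-\omega^2/2=-(\beta+\gamma)(1+\omega h)$, while the surviving past term is $(\omega^2/2)\normm{\vv{k}-\xs}=(\beta+\gamma)\normm{\vv{k}-\xs}$, so the recursion closes only for $\mathcal{E}_k=f(\x{k})-f^\star+(\beta+\gamma)\normm{\vv{k}-\xs}$ (the function used in the paper's proof); with coefficient $\beta$ the cancellation fails whenever $\gamma\neq0$.

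The more substantial gap is the step you defer. After all cancellations the residual is
\begin{equation}
\begin{split}
(\mathrm{err})&=\alpha(1+\omega h)\normm{\x{k+1}-\z{k}}-\beta\normm{\z{k}-\x{k}}-\gamma\normm{\x{k+1}-\x{k}}\\
&\quad-(\beta+\gamma)\normm{\vv{k+1}-\vv{k}}-\omega h\,\beta\normm{\vv{k+1}-\z{k}}-\omega h\,\gamma\normm{\vv{k+1}-\x{k+1}},
\end{split}
\end{equation}
and establishing $(\mathrm{err})\le0$ precisely for $h\le\overline h$ is not a routine completion of squares: one must eliminate $\vv{k+1}$ through the $x$-update, apply the generalized parallelogram identity (\cref{lem:para}) twice, and check that the prescribed $\z{k}=\frac{\tilde h+1}{2\tilde h+1}\x{k}+\frac{\tilde h}{2\tilde h+1}\vv{k}$ (with $\tilde h=\omega h$) makes the dominant negative square coincide with $\normm{\x{k+1}-\z{k}}$, after which the sign condition reduces to $(\alpha-\beta)\tilde h^2-2(\beta+\gamma)\tilde h-(\beta+\gamma)\le0$, whose positive root gives $\tilde h\le\sqrt{\beta+\gamma}/(\sqrt{\alpha+\gamma}-\sqrt{\beta+\gamma})$, i.e.\ $h\le\overline h$. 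Your outline correctly anticipates the shape of this argument but supplies none of it, and since this is the only place where the specific choice of $\z{k}$ and the threshold $\overline h$ enter, the proof is incomplete at exactly the point the theorem is about.
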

\begin{remark}
  % The parameter $2(\beta+\gamma)$ corresponds to $\mu$ in \cref{thm:cr_ac_sc}. Again there are multiple possible ways to determine $\z{k}$. In the theorem, we describe the one that appears to be optimal among those that can be computed from $\x{k}$ and $\vv{k}$ only. The convergence rate is 
  % \begin{align}
  %   &\paren*{ 1+\sqrt{2(\beta+\gamma)}h}^{-k} \\
  %   & \quad \ge \paren*{1+\frac{\sqrt{2(\beta+\gamma)}}{\sqrt{2}(\sqrt{\alpha+\gamma} - \sqrt{\beta+\gamma})}}^{-k}\\
  %   & \quad = \paren*{1 - \sqrt{\frac{\beta+\gamma}{\alpha+\gamma}}}^k,
  % \end{align}
  % which is consistent with that of the accelerated gradient method if we adopt the explicit Euler method as the weak discrete gradient.
  Time scaling would eliminate the factor $\sqrt{2(\beta+\gamma)}$ from the scheme and simplify it~\cite{LC22}, but we do not do so to match the time scale with the accelerated gradient method and to make the correspondence with the continuous system visible.
\end{remark}

\section{Discussions}
\label{sec:discussions}

\paragraph{Tightness of the theories.}
Since our target in this paper is a unified framework that focuses on common structures, it is not surprising if it happens that some theories are not tight in each concrete method included.
Some comments on this aspect are in order.

Let us consider~\cref{thm:cr_conv_d} with the choice (i) $\WDG f (y,x)=\nabla f(x)$, i.e., the simplest steepest descent case for convex objective functions.
Since $\alpha = L/2$ (\cref{thm:cr_conv}), the step size condition is $h \le 1/(2\alpha) = 1/L.$
This gives a stronger sufficient condition than the well-known condition $h \le 2/L.$
The gap should be recovered by more fully utilizing the explicit form of the steepest descent, which is outside the scope of the framework; still the authors hope to point out that the condition $1/L$ correctly reflects the effect of $L$.

On strongly convex functions, the proposed scheme in \cref{thm:cr_ac_sc_d} with the choice (i) achieves the convergence rate $ \Order{ \paren*{ 1 - \sqrt{\frac{\mu}{L}} }^{k} } $. 
Though the method is a first-order method, this rate does not coincide with the optimal rate $ \Order{ \paren*{ 1 - \sqrt{\frac{\mu}{L}} }^{2k} } $ (see \cref{subsec:prior}). 
However, the rate achieved by the proposed method is nearly optimal and indeed coincides with the rate of Nesterov's accelerated gradient method for strongly convex functions.

% \paragraph{Global vs. local weak discrete gradients.}
% The theory in the previous section was presented in the case of global weak DGs. Some known strict discrete gradients cannot be, however, global. For example, in convex cases, the Gonzalez and Itoh--Abe DGs cannot be global, while they are eligible as {\em local} weak DGs. The authors believe the theory in the previous section can be extended to local weak DGs, which is left as a future work.

\paragraph{Polyak--{\L}ojasiewicz type functions.} % どうやら \paragraph でこういう節を作れる模様（松尾）
\label{par:PL}
The weak discrete gradients can be useful also for non-convex functions. Here we illustrate it by taking functions satisfying the Polylak--{\L}ojasiewicz (P{\L}) condition.
A function $f$ is said to satisfy the P{\L} condition if 
\begin{equation}
    - \normm{\nabla f(x)} \le - 2 \mu \paren*{ f(x) - f^{\star} }
\end{equation}
holds for any $ x \in \RR^d$. 
%This inequality is known as the Polyak--{\L}ojasiewicz (P{\L}) condition which
This was introduced as a sufficient condition for the steepest descent to converge~\cite{P63}. 
The set of functions satisfying the P{\L} condition contains all differentiable strongly convex functions and some nonconvex functions such as $ f(x) = x^2 + 3 \sin^2 (x) $. 

\begin{theorem}[Continuous systems]\label{thm:cr_PL}
  Suppose that $f$ satisfies the P{\L} condition. Let $x \colon [0,\infty) \to \RR^d$ be the solution of the gradient flow~\eqref{contgf}. 
  Then the solution satisfies
  \begin{equation}	
		f(x(t)) - f^\star \le \e{-\mu t}\normm{x_0-x^\star}.
  \end{equation}
\end{theorem}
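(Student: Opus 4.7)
My plan is to recycle the Lyapunov strategy that proved the strongly convex rate (\cref{thm:cr_sc}), replacing the single place where strong convexity entered with the P\L{} hypothesis. Since the P\L{} condition was introduced precisely to substitute for strong convexity in gradient-flow convergence arguments, and since the system under consideration is the gradient flow~\eqref{contgf} itself, the computation from~\eqref{pf:gengf} already does most of the work; all that remains is to close the identity $\dt f(x(t)) = -\norm{\nabla f(x(t))}^2$ via the P\L{} inequality.

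Concretely, I would take the natural Lyapunov candidate $V(t) := f(x(t)) - f^\star$, apply the chain rule together with the form of~\eqref{contgf} to obtain $\dt V(t) = -\norm{\nabla f(x(t))}^2$, and then invoke the P\L{} assumption $-\norm{\nabla f(x)}^2 \le -2\mu\paren*{f(x)-f^\star}$ to produce the scalar differential inequality $\dt V(t) \le -2\mu V(t)$. Gr\"onwall's inequality---equivalently, the observation that $\e{2\mu t} V(t)$ is non-increasing---then yields $V(t) \le \e{-2\mu t} V(0)$, giving linear convergence of $f(x(t))$ to $f^\star$.

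The main subtlety is matching the right-hand side to the precise form $\e{-\mu t}\normm{x_0-x^\star}$ stated in the theorem, as opposed to the $\e{-2\mu t}\paren*{f(x_0)-f^\star}$ that the above argument delivers directly. This is a normalization/constant-matching step of the same flavor as in the companion strongly convex result recorded in~\cref{app:proof_conti}, and I expect it to be a bookkeeping exercise---possibly recast via a Lyapunov of the form $\|x(t)-x^\star\|^2$ rather than $f-f^\star$, mirroring~\cref{thm:cr_sc}---rather than a new analytical difficulty. Importantly, no convexity whatsoever is used beyond what the P\L{} condition itself supplies, which is the entire point of admitting nonconvex examples such as $f(x) = x^2 + 3\sin^2(x)$ into the theorem's scope.
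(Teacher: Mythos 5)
Your argument is exactly the paper's proof: it takes $E(t) \coloneqq f(x(t)) - f^\star$, applies the chain rule and the form of the gradient flow to get $\dot{E} = -\normm{\nabla f(x)} \le -2\mu E$, and concludes by Gr\"onwall that $E(t) \le \e{-2\mu t}E(0)$. The ``bookkeeping'' step you defer---converting $\e{-2\mu t}\paren*{f(x_0)-f^\star}$ into the stated $\e{-\mu t}\normm{x_0-x^\star}$---is not carried out in the paper either (its proof stops at $\dot{E}\le -2\mu E$), so your suspicion that the displayed right-hand side does not follow directly from this Lyapunov computation is well founded: absent convexity there is no evident way to dominate $f(x_0)-f^\star$ by $\normm{x_0-x^\star}$, and the stated bound appears to be inherited verbatim from the strongly convex \cref{thm:cr_sc} rather than produced by the argument.
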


Let us define another weak discrete gradient for functions satisfying the P{\L} condition. 
Recall that the first condition of weak discrete gradients (\cref{def:wdg}) has two meanings: the discrete chain rule~\eqref{wchain} and the discrete convex inequality~\eqref{wconv}.
One could consider the P{\L} condition instead of convexity.

\begin{definition}\label{def:PLwDG}
  A gradient approximation $\WDG f \colon \RR^d \times \RR^d \to \RR^d$ is said to be {\it P{\L}-type weak discrete gradient of $f$} if there exists positive numbers $\alpha, \beta$ such that for all $x,y \in \RR^d$ the following three conditions hold:
  \begin{align}
    \WDG f(x,x) &= \nabla f(x), \\
    f(y) - f(x) &\le \inpr{\WDG f(y,x)}{y-x} + \alpha \norm{y-x}^2, \label{wPLchain}\\
    - \norm*{\WDG f(y,x)} &\le - \sqrt{2\mu (f(x) - f^\star)} + \beta \norm{y-x}. \label{wPL}
  \end{align}
\end{definition}

\begin{theorem}\label{thm:PLwDG}
  If $f$ is $L$-smooth and satisfies the P{\L} condition with the parameter $\mu$, the following functions are P{\L}-type weak discrete gradients:
  % \begin{enumerate}[(i)]
  %   \item $\WDG f(y,x) = \nabla f(x)$; then $(\alpha,\beta,\gamma) = (L/2,0,2\mu)$. \label{PLEE}
		% \item $\WDG f(y,x) = \nabla f(y)$; then $(\alpha,\beta,\gamma) = (L/2,L,2\mu)$.		\label{PLIE}
		% \item $\WDG f(y,x) = \nabla f(\frac{x+y}{2})$; then $(\alpha,\beta,\gamma) = (L/8,L/2,2\mu)$. \label{PLmid}
		% \item $\WDG f(y,x) = \nabla_{\mathrm{AVF}} f (y,x)$; then $(\alpha,\beta,\gamma) = (0,L/2,2\mu)$. \label{PLAVF}
		% \item $\WDG f(y,x) = \nabla_{\mathrm{G}} f (y,x)$; then $(\alpha,\beta,\gamma) = (0,5L/8,2\mu)$. \label{PLGon}
		% \item $\WDG f(y,x) = \nabla_{\mathrm{IA}} f (y,x)$; then $(\alpha,\beta,\gamma) = (0,\sqrt{d}L,2\mu)$. \label{PLIA}
  % \end{enumerate}
  \begin{enumerate}[{\em (i)}]
    \item $\WDG f(y,x) = \nabla f(x)$; then $(\alpha,\beta) = (L/2,0)$. \label{PLEE}
		\item If $\WDG f(y,x) = \nabla f(y)$, then $(\alpha,\beta) = (L/2,L)$.		\label{PLIE}
		\item If $\WDG f(y,x) = \nabla f(\frac{x+y}{2})$, then $(\alpha,\beta) = (L/8,L/2)$. \label{PLmid}
		\item \label{PLAVF} If $\WDG f(y,x) = \nabla_{\mathrm{AVF}} f (y,x)$, then $(\alpha,\beta) = (0,L/2)$.
		\item If $\WDG f(y,x) = \nabla_{\mathrm{G}} f (y,x)$, then $(\alpha,\beta) = (0,5L/8)$. \label{PLGon}
		\item If $\WDG f(y,x) = \nabla_{\mathrm{IA}} f (y,x)$, then $(\alpha,\beta) = (0,\sqrt{d}L)$. \label{PLIA}
  \end{enumerate}
\end{theorem}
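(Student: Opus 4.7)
The plan is to verify, for each of the six cases, the three defining conditions of a P{\L}-type weak DG. The consistency $\WDG f(x,x) = \nabla f(x)$ is immediate for every listed formula, so the work concentrates on the inequalities~\eqref{wPLchain} and~\eqref{wPL}.

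For the weak chain rule~\eqref{wPLchain} I would split into two groups. Cases (iv)--(vi) are strict discrete gradients (see~\cref{thm:DG}), satisfying the exact identity $f(y) - f(x) = \inpr{\WDG f(y,x)}{y-x}$, so \eqref{wPLchain} holds with $\alpha = 0$ at once. For (i)--(iii) I would apply the standard $L$-smoothness quadratic bound $f(y) \le f(z) + \inpr{\nabla f(z)}{y-z} + (L/2)\norm{y-z}^2$ at $z = x$, $z = y$, and $z = (x+y)/2$ respectively; isolating an $\inpr{\nabla f(z)}{y-x}$ term and a quadratic error reads off $\alpha = L/2,\,L/2,\,L/8$.

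For the gradient magnitude bound~\eqref{wPL}, the first step is to use the P{\L} inequality $\norm{\nabla f(x)} \ge \sqrt{2\mu(f(x) - f^\star)}$ to reduce the claim to the perturbation estimate $\norm{\WDG f(y,x) - \nabla f(x)} \le \beta \norm{y-x}$. Case (i) is immediate with $\beta = 0$. For cases (ii) and (iii), $L$-Lipschitz continuity of $\nabla f$ at the points $y$ and $(x+y)/2$ gives $\beta = L$ and $\beta = L/2$ respectively. For case (iv), pulling the difference under the integral and using Lipschitzness yields $\int_0^1 \norm{\nabla f(\tau y + (1-\tau)x) - \nabla f(x)}\dd\tau \le \int_0^1 L\tau\dd\tau \cdot \norm{y-x} = (L/2)\norm{y-x}$. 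For case (vi), the one-dimensional mean value theorem rewrites $[\nabla_{\mathrm{IA}} f(y,x)]_i = \partial_i f(\xi^{(i)})$ with an intermediate point $\xi^{(i)}$ satisfying $\norm{\xi^{(i)} - x} \le \norm{y-x}$; applying $L$-Lipschitz coordinate-wise and summing in $\ell^2$ gives $\beta = \sqrt{d}\,L$.

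The main obstacle is the Gonzalez case (v), where $\nabla_{\mathrm{G}} f(y,x) = p + c(y-x)$ with $p = \nabla f((x+y)/2)$ and the scalar correction $c = (f(y) - f(x) - \inpr{p}{y-x})/\norm{y-x}^2$. A naive triangle bound using $\norm{p - \nabla f(x)} \le (L/2)\norm{y-x}$ (Lipschitz) together with $|c| \le L/4$ (obtained by subtracting the midpoint $L$-smoothness bounds applied to $f(y)$ and $f(x)$) would only yield $\beta = 3L/4$. To sharpen to the stated $\beta = 5L/8$ I expect one must exploit the identity $\inpr{\nabla_{\mathrm{G}} f(y,x)}{y-x} = f(y) - f(x)$ to couple $c$ with the component of $p - \nabla f(x)$ parallel to $y-x$, and combine it with the estimate $|f(y) - f(x) - \inpr{\nabla f(x)}{y-x}| \le (L/2)\norm{y-x}^2$, so that the parallel part partially cancels in an orthogonal-decomposition argument.
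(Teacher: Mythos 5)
Your overall strategy coincides with the paper's: both arguments reduce the bound~\eqref{wPL} to the perturbation estimate $\norm*{\WDG f(y,x)-\nabla f(x)}\le\beta\norm{y-x}$ via the P{\L} inequality and the triangle inequality, and your treatments of (i)--(iv) and (vi) are essentially identical to the paper's (exact discrete chain rule for the strict DGs to get $\alpha=0$, the descent lemma for (i)--(iii), and Lipschitz/integral/mean-value estimates for $\beta$). The one place you diverge --- and do not actually close --- is the Gonzalez case (v). The paper uses no orthogonal decomposition there; it applies exactly the triangle bound you dismiss as ``naive,'' but with the sharper midpoint quadrature estimate $\abs*{f(y)-f(x)-\inpr*{\nabla f\paren*{\frac{x+y}{2}}}{y-x}}\le\frac{L}{8}\normm{y-x}$, i.e.\ $\abs{c}\le L/8$ rather than your $L/4$, which gives $\frac{L}{2}+\frac{L}{8}=\frac{5L}{8}$ immediately. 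That $L/8$ estimate is the one derived in the proof of \cref{thm:wDG}~(v) in \cref{append:wDG}.

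That said, your hesitation points at a genuine subtlety rather than a mere missed trick. The $L/8$ bound in \cref{append:wDG} is obtained by combining the descent lemma with the convexity inequality at the midpoint; for a merely $L$-smooth function the best uniform constant in $\abs*{f(y)-f(x)-\inpr*{\nabla f\paren*{\frac{x+y}{2}}}{y-x}}\le C\normm{y-x}$ is $C=L/4$ (take a one-dimensional $f$ whose derivative is a tent with slopes $\pm L$ over $[x,y]$), and P{\L} functions need not be convex. So to prove (v) exactly as stated you must either import convexity or find another route --- and your proposed orthogonal-decomposition repair does not obviously deliver $5L/8$: the discrete chain rule pins the component of $\nabla_{\mathrm{G}}f(y,x)-\nabla f(x)$ along $y-x$ to $\frac{f(y)-f(x)-\inpr{\nabla f(x)}{y-x}}{\norm{y-x}}$, of magnitude at most $\frac{L}{2}\norm{y-x}$, while the orthogonal part is that of $\nabla f\paren*{\frac{x+y}{2}}-\nabla f(x)$, also at most $\frac{L}{2}\norm{y-x}$; combining in $\ell^2$ yields only $\beta=L/\sqrt{2}>5L/8$. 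As written, therefore, case (v) remains unproved in your proposal, and the quickest fix is to cite (or re-derive) the $L/8$ quadrature bound and then proceed exactly as in your ``naive'' triangle-inequality computation.
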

\begin{remark}
  The parameters $\alpha$ and $\beta$ imply the magnitude of the discretization error. % and $\gamma$ is equivalent to $\mu$ in the P{\L} condition.
  As the second condition in \cref{def:PLwDG}, we can adopt instead
  \begin{equation}
    - \norm*{\WDG f(y,x)} \le - \sqrt{2\mu(f(y) - f^\star)} + \beta \norm{y-x}. \label{wPL2}
  \end{equation}
  Then, we obtain better parameters for the implicit Euler method~\eqref{PLIE}.
\end{remark}
The proof of \cref{thm:PLwDG} is postponed in \cref{append:PLwDG}.

\begin{theorem}[Discrete systems]\label{thm:cr_PL_d}
  Let $\WDG f$ be a P{\L}-type weak discrete gradient of $f$.
  Let $f$ be a function which satisfies the necessary conditions that the P{\L}-type weak DG requires.
  %Let $\x{k} \in \RR^d$ ($k=0,1,2,\ldots$) be the solution of~\eqref{discrgf}.
  Let $ \setE*{ \x{k} } $ be the sequence given by \eqref{discrgf}. 
  Then, under the step size condition $h \le 1/\alpha$, the sequence satisfies
  \begin{equation}
		f \paren*{\x{k}} - f^\star \le \paren*{1- 2\mu h \frac{ ( 1 -\alpha h)}{(1 + \beta h)^2}}^k \paren*{ f \paren*{x_0} - f^\star}.
  \end{equation}
  In particular, the sequence satisfies
  \begin{equation}
		f \paren*{\x{k}} - f^\star \le \paren*{1 - \frac{\mu}{2(\alpha + \beta)}}^k \paren*{ f \paren*{x_0} - f^\star},
  \end{equation}
  when the optimal step size $h = 1/(2 \alpha + \beta)$ is employed. 
\end{theorem}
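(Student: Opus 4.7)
The plan is to combine the two defining inequalities of a P\L-type weak discrete gradient, applied with $y=\x{k+1}$ and $x=\x{k}$, together with the scheme~\eqref{discrgf}, to obtain a one-step contraction on $f(\x{k}) - f^\star$. Iterating this contraction yields the stated bound.

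First, I would use the discrete chain rule~\eqref{wPLchain}:
\begin{equation*}
  f(\x{k+1}) - f(\x{k}) \le \inpr{\WDG f(\x{k+1},\x{k})}{\x{k+1}-\x{k}} + \alpha \norm{\x{k+1}-\x{k}}^2.
\end{equation*}
Substituting the scheme $\x{k+1}-\x{k} = -h \WDG f(\x{k+1},\x{k})$ eliminates both occurrences of the difference in terms of the weak discrete gradient and gives the energy-type estimate
\begin{equation*}
  f(\x{k+1}) - f(\x{k}) \le -h(1-\alpha h)\norm*{\WDG f(\x{k+1},\x{k})}^2,
\end{equation*}
which is nonpositive precisely under the step size condition $h \le 1/\alpha$.

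Next, I would convert the norm of the weak gradient into a quantity that bounds the suboptimality, using the P\L-type condition~\eqref{wPL}. Rearranging that inequality and again using $\norm{\x{k+1}-\x{k}} = h\norm{\WDG f(\x{k+1},\x{k})}$ yields
\begin{equation*}
  (1+\beta h)\norm*{\WDG f(\x{k+1},\x{k})} \ge \sqrt{2\mu (f(\x{k}) - f^\star)};
\end{equation*}
both sides are nonnegative (when the right-hand side is positive), so squaring is justified, and we obtain
\begin{equation*}
  \norm*{\WDG f(\x{k+1},\x{k})}^2 \ge \frac{2\mu(f(\x{k})-f^\star)}{(1+\beta h)^2}.
\end{equation*}
Plugging this into the energy estimate and subtracting $f^\star$ from both sides produces the one-step contraction
\begin{equation*}
  f(\x{k+1}) - f^\star \le \paren*{1 - \frac{2\mu h(1-\alpha h)}{(1+\beta h)^2}}(f(\x{k}) - f^\star),
\end{equation*}
and iterating in $k$ yields the first bound of the theorem.

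For the optimal step size, I would maximize $g(h) = 2\mu h(1-\alpha h)/(1+\beta h)^2$ over $h \in (0,1/\alpha]$. A direct computation at $h = 1/(2\alpha+\beta)$ gives $1-\alpha h = (\alpha+\beta)/(2\alpha+\beta)$ and $1+\beta h = 2(\alpha+\beta)/(2\alpha+\beta)$, which reduces $g(h)$ to $\mu/(2(\alpha+\beta))$, matching the claimed rate; a first-order condition confirms this is the global maximizer on the allowed interval. The only subtle point in the argument is the sign bookkeeping when squaring~\eqref{wPL}: one must note that the case in which $\sqrt{2\mu(f(\x{k})-f^\star)} - \beta\norm{\x{k+1}-\x{k}}$ is negative makes the desired bound trivial, so squaring is legitimate in the nontrivial regime. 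Beyond that, the proof is essentially algebraic.
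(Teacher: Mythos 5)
Your proof is correct and follows essentially the same route as the paper's: the discrete chain rule~\eqref{wPLchain} plus the scheme gives the decrease $f(\x{k+1})-f(\x{k}) \le -h(1-\alpha h)\normm{\WDG f(\x{k+1},\x{k})}$, the P{\L}-type condition~\eqref{wPL} with $\norm{\x{k+1}-\x{k}} = h\norm{\WDG f(\x{k+1},\x{k})}$ gives the lower bound $\normm{\WDG f(\x{k+1},\x{k})} \ge 2\mu(f(\x{k})-f^\star)/(1+\beta h)^2$, and combining and optimizing over $h$ yields both stated rates. The paper phrases the one-step contraction as a Lyapunov-type estimate $\delta^+\tilde{E}^{(k)} \le -c\tilde{E}^{(k)}$, which is algebraically identical to your recursion, and your remark on the legitimacy of squaring is sound (indeed, after moving $\beta h\norm{\WDG f(\x{k+1},\x{k})}$ to the left, both sides are nonnegative, so no case distinction is even needed).
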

% \begin{remark}
%   % If $\alpha = 0$, which is fulfilled by discrete gradients, the convergence rate is 
%   % \begin{equation}
%   %   \paren*{1-\gamma\frac{h}{(1 + \beta h)^2}}^k \ge \paren*{1 - \frac{\gamma}{4\beta}}^k,
%   % \end{equation}
%   % where the optimal rate is achieved with $h = 1/\beta$.
%   The convergence rate satisfies 
%   \begin{equation}
%     \paren*{1-\gamma h \frac{1-\alpha h}{(1 + \beta h)^2}}^k \ge \paren*{1 - \frac{\gamma}{4(\alpha + \beta)}}^k,
%   \end{equation}
%   where the optimal rate is achieved with $h = 1/(2 \alpha + \beta)$.
% \end{remark}

\paragraph{Optimization methods that cannot be described by the new framework.}
The proposed framework is flexible enough to describe wide range of gradient-based optimization methods, but it fails to catch some.
For example, it seems some splitting algorithms do not fit the framework such as the Douglas--Rachford splitting method~\cite{EB92}:
\begin{align}
  \frac{\x{k+1/2} - \x{k}}{h} &= -\WDG_1 f_1 \paren*{ \x{k+1/2},\x{k}},\\
  \frac{\x{k+1} - \x{k+1/2}}{h} &= -\WDG_2 f_2 \paren*{ \x{k+1},\x{k+1/2}}.
\end{align}
It seems difficult to write the right-hand side by a single weak DG.

It should be also noted that some existing methods from numerical analysis community do not fit the framework as is.
For example, it seems methods based on Runge--Kutta numerical methods are not easy to express by weak DGs.

Extending the framework for adapting to the above cases is an interesting research direction.

\section{Conclusions and perspectives}
\label{sec:conclusion}

In this paper we proposed a new unified framework for describing gradient-based optimization methods.
It is based on weak discrete gradients (DGs), an extension of discrete gradients which are a powerful tool in structure-preserving numerical methods.
The extension was done so that weak DGs can describe various popular methods in optimization.
As a result, the framework can describe wide range of gradient-based optimization methods investigated in the optimization literature, and also in a branch of numerical analysis where numerical analysis approach to optimization was considered; the framework gives a bird-eye view on them.
The authors hope the framework serves as a good working stage to further explore gradient-based optimization methods.

Some new methods are immediate from the framework both as methods and for their convergence estimates.
For example, strict DG methods applied to the accelerated gradient flows are new, and its theory is given in this paper for the first time~(see~\cref{subsec:contri}).

Due to the restriction of space, no numerical experiments were included in this paper.
The authors have carried out some preliminary numerical tests and confirmed the theory (see \cref{sec:num_exp}).
It was also found out that some new methods can be competitive with state-of-the-art methods, such as Nesterov's accelerated gradient.
Our next goal is to find promising instances within the framework through such trials.

% Acknowledgements should only appear in the accepted version.
%\section*{Acknowledgements}

\bibliography{main}
\bibliographystyle{apalike}

%%%%%%%%%%%%%%%%%%%%%%%%%%%%%%%%%%%%%%%%%%%%%%%%%%%%%%%%%%%%%%%%%%%%%%%%%%%%%%%
%%%%%%%%%%%%%%%%%%%%%%%%%%%%%%%%%%%%%%%%%%%%%%%%%%%%%%%%%%%%%%%%%%%%%%%%%%%%%%%
% APPENDIX
%%%%%%%%%%%%%%%%%%%%%%%%%%%%%%%%%%%%%%%%%%%%%%%%%%%%%%%%%%%%%%%%%%%%%%%%%%%%%%%
%%%%%%%%%%%%%%%%%%%%%%%%%%%%%%%%%%%%%%%%%%%%%%%%%%%%%%%%%%%%%%%%%%%%%%%%%%%%%%%
\newpage
\appendix
\onecolumn

\section{Proofs of theorems in \cref{subsubsec:rates_conti}}
\label{app:proof_conti}

\subsection{Proof of \cref{thm:cr_conv}}
\label{subsec:proof_cr_conv}

It is sufficient to show that a Lyapunov function
\begin{equation}
    E(t) \coloneqq t(f(x(t)) - f^\star) + \frac{1}{2}\normm{x(t)-x^\star}
\end{equation}
is monotonically nonincreasing since
\begin{equation}
    f(x(t)) - f^\star \le \frac{E(t)}{t} \le \frac{E(0)}{t} = \frac{\normm{x(0)-x^\star}}{2t}.
\end{equation}
Indeed,
\begin{align}
    \dot{E} &=t\inpr{\nabla f(x)}{\dot{x}} + f(x) - f^\star + \inpr{x-x^\star}{\dot{x}}\\
    &= -t\normm{\nabla f(x)} + f(x) - f^\star - \inpr{\nabla f(x)}{x-x^\star}\\
    &\le 0
\end{align}
holds. 
Here, at each line, we applied the Leibniz rule and the chain rule, substituted the ODE, and used the convexity of $f$ in this order. 
\qed

\subsection{Proof of \cref{thm:cr_sc}}

It is sufficient to show a Lyapunov function
\begin{equation}
    E(t) \coloneqq \e{\mu t}\paren*{f(x(t))-f^\star + \frac{\mu}{2}\normm{x(t)-\xs}}
\end{equation}
is monotonically nonincreasing and thus it is sufficient to show that $\tilde{E}(t) := \e{-\mu t} E(t)$ satisfies	$\dot{\tilde{E}} \le -\mu \tilde{E}$.
Indeed, 
\begin{align}
    \dot{\tilde{E}} &= \inpr{\nabla f(x)}{\dot{x}} + \mu \inpr{x-\xs}{\dot{x}}\\
    &= - \normm{\nabla f(x)} - \mu\inpr{\nabla f(x)}{x-\xs} \\
    &\le - \normm{\nabla f(x)} - \mu\paren*{f(x) - f^\star + \frac{\mu}{2}\normm{x-\xs}}\\
    &\le -\mu\tilde{E}
\end{align}
holds. 
Here, at each line, we applied the chain rule, substituted the ODE, and used strong convexity of $f$ in this order. 
\qed

\subsection{Proof of \cref{thm:cr_ac_conv}}

  It is sufficient to show that
  \begin{equation}
    E(t) \coloneqq A(t)(f(x(t)) - f^\star) + 2\norm{v(t)-x^\star}^2
  \end{equation}
  is nonincreasing. Actually,
  \begin{align}
    \dot{E} &= \dot{A}(f(x)-f^\star) + A \dt{ (f(x) - f^\star) }+ \dt{(2\norm{v(t)-x^\star}^2)}\\
    &= \dot{A}(f(x)-f^\star) + A \inpr{\nabla f(x)}{\dot{x}} + 4\inpr{\dot{v}}{v-x^\star}\\
    &= \dot{A}(f(x)-f^\star) + A \inpr*{\nabla f(x)}{\frac{\dot{A}}{A}(v - x)} - 4\inpr*{\frac{\dot{A}}{4}\nabla f(x)}{v-x^\star} \\
    &=\dot{A}(f(x) - f^\star - \inpr{\nabla f(x)}{x - x^\star})\\
    &\le 0
  \end{align}
  holds.
  Here, at each line, we applied the Leibniz rule, used the chain rule, substituted the ODE, and used the convexity of $f$ in this order.
  \qed

\subsection{Proof of \cref{thm:cr_ac_sc}}

  It is sufficient to show that 
  \begin{equation}
		{E}(t) \coloneqq \e{\mur t}\paren*{f(x) - f^\star + \frac{\mu}{2}\normm{v-\xs}}
	\end{equation}
  is nonincreasing and thus it is sufficient to show that $\tilde{E}(t):=\e{-\mur t} E(t)$ satisfies $\dot{\tilde{E}} \le -\mur \tilde{E}$.
  Actually,
	\begin{align}
		\dot{\tilde{E}} &= \inpr{\nabla f(x)}{\dot{x}} + \mu\inpr{\dot{v}}{v-\xs}\\
		&= \inpr{\nabla f(x)}{\mur (v-x)} + \mu\inpr{\mur(x-v-\nabla f(x)/\mu)}{v-\xs}\\
		&= \mur(\inpr{\nabla f(x)}{\xs-x} - \mu\inpr{v-x}{v-\xs})\\
		&= \mur\paren*{\inpr{\nabla f(x)}{\xs-x} - \frac{\mu}{2}(\normm{v-x}+\normm{v-\xs}-\normm{x-\xs})}\\
		&\le -\mur \paren*{\paren*{f(x) - f^\star + \frac{\mu}{2}\normm{v-\xs}} - \frac{\mu}{2}\normm{v-x}}\\
		&\le -\mur \tilde{E}
	\end{align}
holds.
Here, at each line, we applied the chain rule, substituted the ODE, rearranged terms, decomposed the inner product by the law of cosines (see~\cref{app:cosines}), and used the strong convexity of $f$.
\qed

\section{Strict discrete gradients and convexity}
\label{sec:counterexample}

In this section, we describe that strict discrete gradients are not generally compatible with the convex inequality; this complements the discussion in~\cref{subsubsec:DG}.
For example, let us consider imitating the discussion in \cref{subsec:proof_cr_conv} by using a discrete gradient scheme $ \delta^+ \x{k} = - \nabla_{\mathrm{d}} f \paren*{ \x{k+1}, \x{k} } $. 
Then, since we use the inequality
\begin{equation}
f(x) - f^{\star} - \inpr*{ \nabla f(x) }{ x - x^{\star} } \le 0
\end{equation}
that holds due to the convexity of $f$, 
we should ensure that the discrete counterpart of the left-hand side
\begin{equation}
    f(x) - f^{\star} - \inpr*{ \nabla_{\mathrm{d}} f (y,x) }{ x - \xs }
\end{equation}
is nonpositive for any $ x, y \in \RR^d$. 
However, there is a simple counterexample as shown below. 

Let us consider a quadratic and convex objective function $ f(x) = \frac{1}{2} \inpr*{ x }{Qx} $, where $Q \in \RR^{d \times d}$ is a positive definite matrix. 
In this case, $ \xs = 0 $ and $ f^{\star} = 0 $ hold. 
Then, when we choose a discrete gradient $ \nabla_{\mathrm{d}} f (y,x) = \nabla_{\mathrm{G}} f (y,x) = \nabla_{\mathrm{AVF}} f (y,x) = Q \left(\frac{y+x}{2}\right) $, 
we see 
\begin{equation}
    f(x) - f^{\star} - \inpr*{ \nabla_{\mathrm{d}} f (y,x) }{ x - \xs }
    = \frac{1}{2} \inpr*{x}{Qx} - \inpr*{ Q \left(\frac{y+x}{2}\right)  }{ x }
    = - \frac{1}{2} \inpr*{y}{Qx},
\end{equation}
which is positive when $ y = - x $ and $ x \neq 0 $.

\section{Proof of \cref{thm:wDG}}\label{append:wDG}

\begin{enumerate}[(i)]
\item
%(\ref{EE}). 
Since we assume $f$ is $L$-smooth and $ \mu$-strongly convex, 
\begin{align}
  f(y) - f(z) &\le \inpr{\nabla f (z)}{y-z} + \frac{L}{2} \normm{y-z},\\
  f(z) - f(x) &\le \inpr{\nabla f (z)}{z-x} - \frac{\mu}{2} \normm{z-x}
\end{align}
holds for any $x,y,z \in \RR^d$. 
By adding each side of these inequalities, we obtain
\begin{equation}
  f(y) - f(x) \le \inpr{\nabla f(z)}{y-x} + \frac{L}{2} \normm{y-z} - \frac{\mu}{2} \normm{z-x}. \label{3ptsd}
\end{equation}
(This inequality is known as the three points descent lemma in optimization.)

%Ad~(\ref{IE}). 
\item It follows immediately from the $\mu$-strong convexity of $f$.
%(Notice that in this case, we can drop the assumption of $L$-smoothness from the assumption of the theorem.)

\item 
%Ad~(\ref{mid}). 
By replacing $z$ in \eqref{3ptsd} with $\theta y+(1-\theta)z$, and invoking \cref{lem:para}, we have
\begin{align}
  &f(y) - f(x) - \inpr*{\nabla f\paren*{\theta y+(1-\theta)z}}{y-x} \\
  &\quad \le \frac{L}{2}\normm{y-(\theta y + (1-\theta)z)} - \frac{\mu}{2} \normm{\theta y + (1-\theta)z - x} \\
  &\quad = \frac{L}{2}(1-\theta)^2 \normm{y-z} - \frac{\mu}{2}  (-\theta(1-\theta)\normm{y-z} + (1-\theta)\normm{z-x} + \theta\normm{y-x}).
\end{align}
Especially when $\theta = 1/2$, $(\alpha,\beta,\gamma) = (L/8+\mu/8,\mu/4,\mu/4)$.

\item
%Ad~(\ref{AVF}). 
By the same calculation as (\ref{mid}), we obtain
\begin{align}
  &f(y) - f(x) - \inpr*{\int_0^1 \nabla f(\tau y + (1-\tau)z) \dd \tau}{y-x}\\
  &\quad = \int_0^1 \left[ f(y) - f(x) - \inpr*{\nabla f(\tau y + (1-\tau)z)}{y-x} \right] \dd \tau\\
  % &\quad \le \int_0^1 \left[ \frac{L}{2}\normm{y-(\tau y + (1-\tau)z)} - \frac{\mu}{2} \normm{\tau y + (1-\tau)z - x}\right] \dd \tau\\
  &\quad \le \int_0^1 \left[ \frac{L}{2}(1-\tau)^2 \normm{y-z} - \frac{\mu}{2} (-\tau(1-\tau)\normm{y-z} + (1-\tau)\normm{z-x} + \tau\normm{y-x}) \right] \dd \tau\\
  &\quad = \paren*{\frac{L}{6} + \frac{\mu}{12}}\normm{y-z} - \frac{\mu}{4} \normm{z-x} - \frac{\mu}{4}\normm{y-x}.
\end{align}

\item
%Ad~(\ref{Gon}). 
By \eqref{3ptsd}, we obtain
\begin{align}
  f(y) - f(z) - \inpr*{\nabla f\paren*{\frac{y+z}{2}}}{y-z}
  \le \frac{L}{2}\normmb{y-\frac{y+z}{2}} - \frac{\mu}{2} \normmb{\frac{y+z}{2}-z}
  =\frac{L-\mu}{8}\normm{y-z}.
\end{align}
Since this inequality holds with $y$ and $z$ swapped, we have
\begin{equation}
  \abs*{f(y) - f(z) - \inpr*{\nabla f\paren*{\frac{y+z}{2}}}{y-z}}
  \le \frac{L-\mu}{8}\normm{y-z}.
\end{equation}

Thus,
\begin{align}
  &f(y) - f(x) - \inpr{\nabla_{\mathrm{G}}f(y,z)}{y-x} \\
  &\quad = f(y) - f(x) - \inpr*{\nabla f\paren*{\frac{y+z}{2}} + \frac{f(y) - f(z) - \inpr*{\nabla f\paren*{\frac{y+z}{2}}}{y-z}}{\normm{y-z}}(y-z)}{y-x} \\
  &\quad \le f(y) - f(x) - \inpr*{\nabla f\paren*{\frac{y+z}{2}}}{y-x} + \abs*{\frac{f(y) - f(z) - \inpr*{\nabla f\paren*{\frac{y+z}{2}}}{y-z}}{\normm{y-z}}}\abs{\inpr{y-z}{y-x}}\\
  &\quad \le \frac{L}{2}\normmb{y-\frac{y+z}{2}} - \frac{\mu}{2} \normmb{\frac{y+z}{2} - x} + \frac{L-\mu}{8} \paren*{\frac{L-\mu}{8\mu}\normm{y-z} + \frac{2\mu}{L-\mu}\normm{y-x}} \\
  &\quad = \frac{L}{8}\normm{y-z} - \frac{\mu}{2}\paren*{\frac{1}{2}\normm{y-x} + \frac{1}{2}\normm{z-x} - \frac{1}{4}\normm{y-z}} + \frac{(L-\mu)^2}{16\mu}\normm{y-z} + \frac{\mu}{4}\normm{y-x}\\
  &\quad = \paren*{\frac{L}{8}+\frac{\mu}{8}+\frac{(L-\mu)^2}{16\mu}}\normm{y-z} - \frac{\mu}{4}\normm{z-x}
\end{align}
holds,
where the second inequality follows from the arithmetic-geometric means (AM-GM) inequality.

\item
%Ad~(\ref{IA}). 
In the following, for $y,z \in \RR^d$ and $k= 2,\dots,d$, $y_{1:k-1}z_{k:d}$ denotes a vector $(y_1,\dots,y_{k-1},z_{k},\dots,z_d)^\top \in \RR^d$, while $y_{1:0}z_{1:d}$ and $y_{1:d}z_{d+1:d}$ denote $z$ and $y$, respectively. 
By the telescoping sum and $\mu$-strong convexity of $f$, we obtain
\begin{align}
  &f(y) - f(x) - \inpr{\nabla_{\mathrm{IA}} f(y,z)}{y-x} \\
  &\quad = f(y) - f(x) - \sum_{k=1}^{d} \frac{f(y_{1:k}z_{k+1:d}) - f(y_{1:k-1}z_{k:d})}{y_k-z_k}(y_k-x_k) \\
  &\quad = f(y) - f(x) - \sum_{k=1}^{d} \frac{f(y_{1:k}z_{k+1:d}) - f(y_{1:k-1}z_{k:d})}{y_k-z_k}(y_k-z_k+z_k-x_k) \\
  &\quad = f(z) - f(x) - \sum_{k=1}^{d} \frac{f(y_{1:k}z_{k+1:d}) - f(y_{1:k-1}z_{k:d})}{y_k-z_k}(z_k-x_k) \\
  &\quad \le \inpr{\nabla f(z)}{z-x} - \frac{\mu}{2}\normm{z-x} \\
  &\qquad\quad + \sum_{k=1}^{d}
  \begin{cases}
    \displaystyle
    \frac{(\nabla f(y_{1:k-1}z_{k:d}))_k(z_k-y_k) - \frac{\mu}{2}(z_k-y_k)^2}{y_k-z_k} (z_k-x_k) & \text{if}\quad \displaystyle \frac{z_k - x_k}{y_k-z_k} > 0 \\
    \displaystyle
    \frac{(\nabla f(y_{1:k}z_{k+1:d}))_k(y_k-z_k) - \frac{\mu}{2}(y_k-z_k)^2}{z_k-y_k} (z_k-x_k) & \text{if}\quad \displaystyle \frac{z_k - x_k}{y_k-z_k} < 0
  \end{cases}\\
  &\quad = \inpr{\nabla f(z)}{z-x} - \frac{\mu}{2}\normm{z-x} \\
  &\qquad\quad - \sum_{k=1}^{d}
  \begin{cases}
    (\nabla f(y_{1:k-1}z_{k:d}))_k (z_k-x_k) + \frac{\mu}{2}(y_k-z_k) (z_k-x_k) & \text{if}\quad \displaystyle \frac{z_k - x_k}{y_k-z_k} > 0\\
    (\nabla f(y_{1:k}z_{k+1:d}))_k(z_k-x_k) + \frac{\mu}{2}(z_k-y_k)(z_k-x_k) & \text{if}\quad \displaystyle \frac{z_k - x_k}{y_k-z_k} < 0
  \end{cases}\\
  &\quad = \sum_{k=1}^{d} 
  \begin{cases}
    ((\nabla f(z))_k-(\nabla f(y_{1:k-1}z_{k:d}))_k)(z_k-x_k) & \text{if}\quad \displaystyle \frac{z_k - x_k}{y_k-z_k} > 0 \\
    ((\nabla f(z))_k-(\nabla f(y_{1:k}z_{k+1:d}))_k)(z_k-x_k) & \text{if}\quad \displaystyle \frac{z_k - x_k}{y_k-z_k} < 0
  \end{cases}\\
  &\qquad\quad - \frac{\mu}{2}\normm{z-x} - \frac{\mu}{2}\sum_{k=1}^{d} \abs{z_k-y_k}\abs{z_k-x_k}.
\end{align}
To evaluate the first term of the most right-hand side, we use the following inequalities:
\begin{align}
  \abs{(\nabla f(z))_k-(\nabla f(y_{1:k-1}z_{k:d}))_k} &\le L\norm{z-y_{1:k-1}z_{k:d}} \le L\norm{z-y},\\
  \abs{(\nabla f(z))_k-(\nabla f(y_{1:k}z_{k+1:d}))_k} &\le L\norm{z-y_{1:k}z_{k+1:d}} \le L\norm{z-y},
\end{align}
which hold due to the $L$-smoothness of $f$,
and also
\begin{equation}
  \sum_{k=1}^d \abs{z_k-x_k} \le \sqrt{d \sum_{k=1}^d \abs{z_k-x_k}^2} = \sqrt{d} \norm{z-x},
\end{equation}
which holds due to Jensen's inequality. 
Using them, we obtain
\begin{align}
    f(y) - f(x) - \inpr{\WDG_{\mathrm{IA}} f(y,z)}{y-x} 
    &\le \sqrt{d} L \norm{z-y} \norm{z-x} - \frac{\mu}{2}\normm{z-x} - \frac{\mu}{2}\sum_{k=1}^{d} \abs{z_k-y_k}\abs{z_k-x_k} \\
    &\le \frac{dL^2}{\mu}\normm{z-y} + \frac{\mu}{4}\normm{z-x} - \frac{\mu}{2}\normm{z-x} - \frac{\mu}{2}\sum_{k=1}^{d} \abs{z_k-y_k}\abs{z_k-x_k} ,
\end{align}
where the last inequality holds due to the AM-GM inequality. 
Finally, the last term is bounded by
\begin{align}
  \frac{\mu}{2}\sum_{k=1}^d \abs{z_k-y_k}\abs{z_k-x_k}
  &= -\frac{\mu}{4}\sum_{k=1}^d \paren*{\abs{z_k-y_k}^2 + \abs{z_k-x_k}^2 - \abs{\abs{z_k-y_k} - \abs{z_k-x_k}}^2}\\
  &\le -\frac{\mu}{4}\sum_{k=1}^d \paren*{\abs{z_k-y_k}^2 + \abs{z_k-x_k}^2 - \abs{y_k-x_k}^2}\\
  &= -\frac{\mu}{4} \paren*{\normm{z-y} + \normm{z-x} - \normm{y-x}}.
\end{align}
This proves the theorem. 
% Therefore, 
% \begin{equation}
%   (\alpha,\beta,\gamma) = \paren*{\frac{dL^2}{\mu}-\frac{\mu}{2}, \frac{3\mu}{4}, -\frac{\mu}{2}}.
% \end{equation}
\end{enumerate}
\qed

For \eqref{IE}, as noted in the above proof, the assumption of differentiability of $f$ is unnecessary, let alone $L$-smoothness. Since $f$ is a proper convex function on $\RR^d$ in our setting, the subdifferential $\rd f(x)$ is nonempty for all $x\in\RR^d$. Thus we can use $\WDG f(y,x) \in \rd f(y)$ instead of $\WDG f(y,x) = \nabla f(y)$. By definition of subgradients, we can recover the same parameters $(\alpha,\beta,\gamma)$ as the differentiable case.

If $\mu = 0$, the proofs for \eqref{Gon} and \eqref{IA} cease to work where we apply the AM-GM inequality to the inner product. 
This is also pointed out in the main body of the paper.
% However, if $x,y,z$ is bounded, we can recover the proof by bounding the inner product.

\section{Proofs of theorems in \cref{sec:rates_discrete}}
\label{app:proof_discrete}

\subsection{Proof of \cref{thm:cr_conv_d}} 

It is sufficient to show that the discrete Lyapunov function
\begin{equation}
    E^{(k)} \coloneqq kh \paren*{ f \paren*{ \x{k}}-f^\star} + \frac{1}{2}\norm*{\x{k} - \xs}^2
\end{equation}
is nonincreasing. Actually,
\begin{align}
    &\dpE \\
    &\quad = kh \paren*{ \delp f \paren*{\x{k}} } + f \paren*{ \x{k+1} } - f^\star + \delp \paren*{\frac{1}{2} \norm*{\x{k}-\xs}^2}\\
    &\quad \le kh \paren*{ \inpr*{\WDG f \paren*{ \x{k+1},\x{k}}}{\dpx} + \alpha h\norm*{\dpx}^2} + f \paren*{ \x{k+1}} - f^\star + \inpr*{\x{k+1}-x^\star}{\dpx} - \frac{h}{2} \norm*{\dpx}^2\\
    &\quad = -kh(1-\alpha h)\norm*{\WDG f \paren*{ \x{k+1},\x{k}}}^2 + f \paren*{ \x{k+1}} - f^\star - \inpr*{\WDG f \paren*{ \x{k+1},\x{k}}}{\x{k+1}-x^\star}- \frac{h}{2}\normm{\dpx}\\
    &\quad \le -kh(1-\alpha h)\normm{\WDG f \paren*{\x{k+1},\x{k}}} - \paren*{\frac{h}{2}-\alpha h^2}\normm{\dpx}
\end{align}
holds, and thus if $h \le 1/(2\alpha)$, the right-hand side is not positive. 
Here, at each line, we applied the discrete Leibniz rule, the weak discrete gradient condition~\eqref{wdgsc}, \cref{lem:cos} as the chain rule, substituted the scheme, and applied again \eqref{wdgsc} as the convex inequality.
 \qed

\subsection{Proof of \cref{thm:cr_sc_d}}

Let 
\begin{equation}
    \tE{k} \coloneqq f\paren*{\x{k}} - f^\star + (\beta+\gamma)\normm{\xkxs}.
\end{equation}
If $\dptE \le -c \tE{k+1}$ for $c>0$, it can be concluded that $\E{k} = (1+ch)^k \tE{k}$ is nonincreasing, and hence $f(\x{k}) - f^\star \le (1+ch)^{-k}\E{0}$. Actually,
\begin{align}
    &\dptE \\ 
    &\quad = \delp f \paren*{ \x{k}} + \delp \paren*{(\beta+\gamma)\normm{\x{k}-\xs}}\\
    &\quad \le \inpr*{\WDG f \paren*{ \x{k+1},\x{k}}}{\dpx} + (\alpha-\gamma) h\normm{\dpx}+ 2(\beta+\gamma)\inpr*{\x{k+1}-x^\star}{\dpx} - (\beta+\gamma) h\normm{\dpx}\\
    &\quad = -(1-(\alpha-\gamma) h + (\beta+\gamma)h)\normm{\WDG f \paren*{ \x{k+1},\x{k}}} - 2(\beta+\gamma) \inpr*{\x{k+1}-x^\star}{\WDG f \paren*{ \x{k+1},\x{k}}} \\
    &\quad \le -2(\beta+\gamma) \paren*{f \paren*{\x{k}} - f^\star + \beta\normm{\x{k}-\xs} + \gamma\normm{\x{k+1}-\xs}} \\
    &\qquad -\paren*{ 1-(\alpha-\gamma) h + (\beta+\gamma)h - 2\alpha(\beta+\gamma)h^2}\normm{\WDG f\paren*{\x{k+1},\x{k}}} \label{gdscpr}
\end{align}
 holds.
  Here, after the second line we used the weak discrete gradient condition~\eqref{wdgsc} as the chain rule, substituted the scheme and used \eqref{wdgsc} as the strongly convex inequality.
  
  Now we aim to bound $\normm{\xkxs}$ with $\normm{\xksxs}$. By the same calculation for $\delp \normm{\xkxs}$ as above, we get the evaluation
	\begin{equation}
		\delp \normm{\xkxs} 
			\le -2(f(\x{k+1})-f^\star +\beta\normm{\xkxs} + \gamma\normm{\xksxk}) 
			 - (h-2\alpha h^2)\normm{\WDG f\paren*{\x{k+1},\x{k}}}.
		\label{gdscnorm}
	\end{equation}
	Thus, if $h \le 1/(2\alpha)$, we get $\normm{\xksxs} \le \normm{\xkxs}$. In this case, since the second term of \eqref{gdscpr} is nonpositive, it follows that 
	\begin{equation}
		\dptE \le -2(\beta + \gamma)\tE{k+1}.
	\end{equation}
	
  To obtain a better rate which is included in the statement of the theorem, by directly using \eqref{gdscnorm} for \eqref{gdscpr}, we see	\begin{align}
		\dptE &\le -\frac{2(\beta+\gamma)}{1-2\beta h}\paren*{f(\x{k+1})-f^\star +(\beta+\gamma)\normm{\xksxs}}\\
		&\quad -\paren*{\frac{1-2\alpha h}{1-2\beta h} 2(\beta+\gamma)\beta h^2 + 1-(\alpha-\gamma) h + (\beta+\gamma)h - 2\alpha(\beta+\gamma)h^2} \normm{\WDG f\paren*{\x{k+1},\x{k}}} .
	\end{align}
	Since the second term of the right-hand side is nonpositive under $h\le 1/(\alpha+\beta)$, it can be concluded that 
	\begin{equation}
		\dptE \le -\frac{2(\beta+\gamma)}{1-2\beta h}\tE{k+1}.
	\end{equation}
	In this case the convergence rate is 
	\begin{equation}
		\paren*{\frac{1}{1+\frac{2(\beta+\gamma)h}{1-2\beta h}}}^k = \paren*{1 - \frac{2(\beta+\gamma)h}{1+2\gamma h}}^k.
	\end{equation}
\qed

\subsection{Proof of \cref{thm:cr_ac_conv_d}}

  It is sufficient to show that
\begin{equation}
    E^{(k)} \coloneqq A_k \paren*{ f \paren*{ \x{k} }-f^\star} + 2\norm{\vv{k}-\xs}^2
\end{equation}
  is nonincreasing. Actually,
  \begin{align}
    &\dptE \\
    &\quad = (\delp A_{k}) \paren*{ f \paren*{ \x{k+1}} -f^\star} + A_k \paren*{ \delp f \paren*{ \x{k} }} + 2\delp \paren*{ \norm*{\vv{k}-\xs}^2}\\
    &\quad \le (\delp A_{k})\paren*{ f \paren*{ \x{k+1}} -f^\star} +A_k\inpr*{\WDG f \paren*{\x{k+1},\z{k}}}{\dpx}\\
    &\qquad +4\inpr*{\dpv}{\vv{k+1}-\xs} + \frac{A_k}{h}\alpha \norm*{\x{k+1}-\z{k}}^2 - 2h\norm*{\dpv}^2 \\ 
    &\quad \le (\delp A_{k})\paren*{ f \paren*{ \x{k+1}} -f^\star} +A_k\inpr*{\WDG f\paren*{\x{k+1},\z{k}}}{\frac{\delp A_{k}}{A_k}\paren*{\vv{k+1}-\x{k+1}}}\\
    &\qquad - 4\inpr*{\frac{\delp A_k}{4} \WDG f \paren*{ \x{k+1},\z{k}}}{\vv{k+1}-\xs} + \frac{A_k}{h}\alpha \norm*{\x{k+1}-\z{k}}^2 - 2h\norm*{\dpv}^2 \\ 
    &\quad = (\delp A_{k}) \paren*{ f \paren*{ \x{k+1}}-f^\star -\inpr*{\WDG f\paren*{ \x{k+1},\z{k}}}{\x{k+1}-\xs}} + \frac{A_k}{h}\alpha \norm*{\x{k+1}-\z{k}}^2 - 2h\norm*{\dpv}^2 \\  	
    &\quad \le (\delp A_{k})\alpha\normm{\x{k+1}-\z{k}} + \frac{A_{k}}{h}{\alpha} \norm*{\x{k+1}-\z{k}}^2 - 2h\norm*{\dpv}^2 \\
    &\quad = \frac{1}{h} \paren*{ A_{k+1}\alpha \norm*{\x{k+1}-\z{k}}^2 - 2\norm*{\vv{k+1}-\vv{k}}^2 } =: (\mathrm{err}). \label{accpr}
  \end{align}
  Here, at each line, we used the discrete Leibniz rule, applied \eqref{wdgsc} and \cref{lem:cos} as the chain rule, substituted the scheme, and applied again \eqref{wdgsc} as the convex inequality.

  Now we define $\z{k}$ so that $(\mathrm{err}) \le 0$ holds.
  When $\z{k} := \x{k+1}$, $(\mathrm{err})$ becomes nonpositive without step size constraints. 

	Or, since using the scheme we can write
	\begin{align}
		\normm{\x{k+1}-\z{k}} = \normmb{\frac{A_{k+1}-A_k}{A_{k+1}}\vv{k+1} + \frac{A_k}{A_{k+1}}\x{k} - \z{k}},
	\end{align}
  by setting
	\begin{equation}
    \z{k} := \frac{A_{k+1}-A_k}{A_{k+1}}\vv{k} + \frac{A_k}{A_{k+1}}\x{k},
  \end{equation}
we obtain
\begin{equation}
	h \times (\mathrm{err}) = \paren*{\frac{(A_{k+1}-A_k)^2}{A_{k+1}}\alpha - 2}\normm{\vksvk}.
\end{equation}
This choice of $\z{k}$ is shown in the theorem.
When, for example, $A_k = (kh)^2$, $(\mathrm{err}) \le 0$, provided that $h\le 1/\sqrt{2\alpha}$.
Here we see that only up to a quadratic function is allowed as $A_k$ if $\alpha > 0$.
 \qed

\subsection{Proof of \cref{thm:cr_ac_sc_d}}

  It is sufficient to show that 
  \begin{equation}
    \tilde{E}^{(k)} \coloneqq f \paren*{ \x{k} } - f^\star + (\beta+\gamma) \normm{\vv{k} - \xs}
  \end{equation}
  satisfies $\dptE \le -\sqrt{m}\tE{k+1}$.
  To simplify notation, $2(\beta + \gamma)$ is written as $m$, and the error terms are gathered into $(\mathrm{err})$.
  Then, we see
\begin{align}
  \dptE
  & =\delp f \paren*{\x{k}} + \frac{m}{2} \delp \normm{\vv{k} - \xs}\\
  & \le \inpr*{\wdgz}{\dpx} + \frac{\alpha}{h} \normm{\xksz} - \frac{\beta}{h} \normm{\zxk} - \gamma h \normm{\dpx}\\
  &\quad + m \inpr*{\dpv}{\vksxs} - \frac{m}{2}h \normm{\dpv}\\
  & = \inpr*{\wdgz}{\sqrt{m} \paren*{ \vksxks }} + (\mathrm{err})\\
  &\quad + m \inpr*{\sqrt{m}\paren*{\frac{2\beta}{m}\z{k} + \frac{2\gamma}{m}\x{k+1}-\vv{k+1}-\frac{1}{m} \wdgz}}{\vksxs} \\
  & = \sqrt{m}\inpr*{\wdgz}{\xs-\x{k+1}} - 2\sqrt{m}\beta \inpr*{\vksz}{\vksxs} \\
  &\quad - 2\sqrt{m}\gamma\inpr*{\vksxks}{\vksxs} + (\mathrm{err})\\
  &= \sqrt{m}\inpr*{\wdgz}{\xs-\x{k+1}} - \sqrt{m}\beta \paren*{\normm{\vksz} + \normm{\vksxs} - \normm{\z{k}-\xs}}\\
  &\quad - \sqrt{m}\gamma \paren*{\normm{\vksxks} + \normm{\vksxs} - \normm{\xksxs}} + (\mathrm{err})\\
  &\le -\sqrt{m}\paren*{f\paren*{\x{k+1}} - f^\star + \frac{m}{2}\normm{\vksxs}}\\
  &\quad + \sqrt{m} \paren*{ \alpha \normm{\xksz} - \beta \normm{\vksz} - \gamma\normm{\vksxks}} + (\mathrm{err})\\
  &= -\sqrt{m}\tilde{E}^{(k+1)} + (\mathrm{err}).
\end{align}
Here, the first inequality follows from \eqref{wdgsc} as the chain rule, the second equality from the substitution of the form of the method, and the second inequality follows from again \eqref{wdgsc} as the strongly convex inequality.
In the second inequality, we also used
\begin{equation}
-\inpr*{\wdgz}{\x{k+1}-\xs} + \beta \normm{\z{k}-\xs} + \gamma \normm{\xksxs} \le - \paren*{ f \paren*{ \x{k+1}}-f^\star} + \alpha\normm{\xksz}.
	% \begin{split}
	% 	& -\inpr{\wdgz}{\x{k+1}-\xs} + \beta \normm{\z{k}-\xs} + \gamma \normm{\xksxs} \\
	% 	&\qquad \le -(f(\x{k+1})-f^\star) + \alpha\normm{\xksz}.
	% \end{split}
\end{equation}

Now we define $\x{k}$ so that $(\mathrm{err}) \le 0$.
An obvious choice is $\z{k} := \x{k+1}$, where $(\mathrm{err})$ is nonpositive under any step size.

To derive another definition of $\z{k}$, we proceed with the calculation of the error terms by substituting the form of the method:
\begin{align}
  h \times (\mathrm{err})
	&= \alpha \normm{\xksz} - \beta \normm{\zxk} - \gamma \normm{\xksxk} - (\beta+\gamma)\normm{\vksvk}\\
	&\quad + \sqrt{m}h \paren*{ \alpha  \normm{\xksz} - \beta \normm{\vksz} - \gamma \normm{\vksxks}} \\
  &= \alpha (h+1)\normm{\xksz} - \beta \paren*{ \normm{\zxk} + \normm{\vksvk} + \sqrt{m}h\normm{\vksz} }\\
	&\quad - \gamma \paren*{ \normm{\xksxk} + \normm{\vksvk} + \sqrt{m}h\normm{\vksxks}}\\
  &=  \alpha (h+1)\normm{\xksz} \\
	&\quad - \beta \paren*{\normm{\zxk} + \normm{\vksvk} + \sqrt{m}h\normmb{\x{k+1} +\frac{\xksxk}{\sqrt{m}h} - \z{k}}} \\
	&\quad - \gamma \left(\normm{\xksxk} + \normm{\vksvk} + \sqrt{m}h\normmb{\x{k+1} +\frac{\xksxk}{\sqrt{m}h} - \x{k+1}} \right).
\end{align}
Hereafter, $\sqrt{m}h$ is denoted by $\tilde{h}$.
By using \cref{lem:para}, we have
\begin{align}
  &\tilde{h}\normmb{\x{k+1} + \frac{\xksxk}{\tilde{h}} - \z{k}}\\
  &\quad = \tilde{h}\paren*{\frac{\tilde{h}+1}{\tilde{h}}}^2\normmb{\frac{\tilde{h}}{\tilde{h}+1}\paren*{\xksz} + \frac{1}{\tilde{h}+1}\paren*{\xksxk}}\\
  &\quad = \frac{\paren*{\tilde{h}+1}^2}{\tilde{h}} \paren*{\frac{\tilde{h}}{\tilde{h}+1}\normm{\xksz} + \frac{1}{\tilde{h}+1}\normm{\xksxk} - \frac{\tilde{h}}{\paren*{\tilde{h}+1}^2}\normm{\zxk}}\\
  &\quad = \paren*{\tilde{h}+1}\normm{\xksz} + \frac{\tilde{h}+1}{\tilde{h}} \normm{\xksxk} - \normm{\zxk}.
\end{align}
Thus, we see
\begin{align}
  \tilde{h}\times(\mathrm{err}) =  (\alpha - \beta) \paren*{ \tilde{h}+1} \normm{\xksz} - (\beta + \gamma) \paren*{\frac{\tilde{h}+1}{\tilde{h}}\normm{\xksxk} + \normm{\vksvk}}.
\end{align}
Here when we define $\z{k}:=\x{k}$, $(\mathrm{err})$ is nonpositive under the condition
\begin{equation}
  (\alpha-\beta)\paren*{ \tilde{h}+1} - (\beta+\gamma)\frac{\tilde{h}+1}{\tilde{h}} \le 0.
\end{equation}
This condition reads as $\tilde{h} \le (\beta+\gamma)/(\alpha-\beta)$, and the convergence rate is
\begin{equation}
  \paren*{ 1+\sqrt{2(\beta+\gamma)}}^{-k} = \paren*{ 1+\tilde{h}}^{-k} \ge \paren*{1-\frac{\beta+\gamma}{\alpha+\gamma}}^k.
\end{equation}

To obtain a better rate, we continue the computation of $(\mathrm{err})$ without defining $\z{k}$. Let
\begin{equation}
  \eta = \frac{1}{\frac{\tilde{h}+1}{\tilde{h}}+1}=\frac{\tilde{h}}{2\tilde{h}+1},
\end{equation}
and again by inserting the form of the method, and by using \cref{lem:para},
\begin{align}
  &\frac{\tilde{h}+1}{\tilde{h}}\normm{\xksxk} + \normm{\vksvk} \\
  &\quad = \frac{\tilde{h}+1}{\tilde{h}}\normm{\xksxk}+\normmb{\x{k+1} + \frac{\xksxk}{\tilde{h}} - \vv{k}} \\
  &\quad = \frac{\tilde{h}+1}{\tilde{h}}\normm{\xksxk} + \paren*{\frac{\tilde{h}+1}{\tilde{h}}}^2\normmb{\x{k+1}- \frac{\tilde{h}}{\tilde{h}+1}\vv{k}-\frac{1}{\tilde{h}+1}\x{k}}\\
  &\quad = \frac{\tilde{h}+1}{\tilde{h}}\frac{1}{\eta} \paren*{\eta \normm{\xksxk} + \ia \normmb{\x{k+1}- \frac{\tilde{h}}{\tilde{h}+1}\vv{k}-\frac{1}{\tilde{h}+1}\x{k}}}\\
  &\quad = \frac{\tilde{h}+1}{\tilde{h}}\frac{1}{\eta} \left( \normmb{\eta\paren*{\xksxk} + \ia \paren*{\x{k+1}- \frac{\tilde{h}}{\tilde{h}+1}\vv{k}-\frac{1}{\tilde{h}+1}\x{k}}} \right. \\
  &\qquad\qquad\qquad \left. + \eta\ia\normmb{\xksxk - \paren*{\x{k+1}- \frac{\tilde{h}}{\tilde{h}+1}\vv{k}-\frac{1}{\tilde{h}+1}\x{k}}} \right) \\
  &\quad = \frac{\tilde{h}+1}{\tilde{h}}\frac{2\tilde{h}+1}{\tilde{h}} \left[ \normmb{\x{k+1} - \paren*{\frac{\tilde{h}+1}{2\tilde{h}+1}\x{k} + \frac{\tilde{h}}{2\tilde{h}+1}\vv{k}}} + \frac{\tilde{h}}{2\tilde{h}+1}\frac{\tilde{h}+1}{2\tilde{h}+1}\paren*{\frac{\tilde{h}}{\tilde{h}+1}}^2 \normm{\vv{k}-\x{k}} \right].
\end{align}
Hence we obtain 
\begin{equation}
	\begin{split}
		\tilde{h} \times (\mathrm{err})
		& =(\alpha-\beta)\paren*{\tilde{h}+1}\normm{\xksz} - (\beta + \gamma)\frac{\tilde{h}}{2\tilde{h}+1} \normm{\vv{k}-\x{k}}\\
		&\quad - (\beta + \gamma)\frac{\tilde{h}+1}{\tilde{h}}\frac{2\tilde{h}+1}{\tilde{h}} \normmb{\x{k+1} - \frac{\tilde{h}+1}{2\tilde{h}+1}\x{k} - \frac{\tilde{h}}{2\tilde{h}+1}\vv{k}}.
	\end{split}
\end{equation}
If we set
\begin{equation}
	\z{k} := \frac{\tilde{h}+1}{2\tilde{h}+1}\x{k} + \frac{\tilde{h}}{2\tilde{h}+1}\vv{k},
\end{equation}
$(\mathrm{err})$ is nonpositive under the condition
\begin{equation}
  (\alpha-\beta)\paren*{\tilde{h}+1} - (\beta+\gamma)\frac{\tilde{h}+1}{\tilde{h}}\frac{2\tilde{h}+1}{\tilde{h}} \le 0.
\end{equation}
The definition of $\z{k}$ is shown in the theorem.
By solving the above inequality, we obtain the step size limitation
\begin{equation}
  \tilde{h} \le \frac{\sqrt{\beta + \gamma}}{\sqrt{\alpha + \gamma} - \sqrt{\beta + \gamma}},
\end{equation}
which is shown in the theorem.
\qed

\section{Law of cosines and parallelogram identity} \label{app:cosines}
This section summarizes some useful lemmas used in the preceding sections.

In Hilbert spaces, especially in Euclidean spaces, the law of cosines holds:
\begin{equation}
  \normm{y-x} = \normm{y} + \normm{x} - 2\inpr{y}{x}.
\end{equation}
In this paper, we use this formula as an error-containing discrete chain rule of the squared norm.
\begin{lemma}\label{lem:cos}
  For all $\x{k+1},\x{k} \in \RR^d$,
  \begin{equation}
    \normm{\x{k+1}} - \normm{\x{k}} = 2\inpr{\x{k+1}}{\x{k+1}-\x{k}} - \normm{\x{k+1}-\x{k}}
  \end{equation}
\end{lemma}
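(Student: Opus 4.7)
The identity is a one-line consequence of bilinearity of the inner product, so my plan is simply to expand and collect terms, using the law of cosines displayed immediately above the lemma. Concretely, I would apply the law of cosines with $y=\x{k+1}$ and $x=\x{k}$ to rewrite $\normm{\x{k+1}-\x{k}}$ as $\normm{\x{k+1}} + \normm{\x{k}} - 2\inpr{\x{k+1}}{\x{k}}$, and independently expand the inner product on the right-hand side of the claimed identity using linearity: $2\inpr{\x{k+1}}{\x{k+1}-\x{k}} = 2\normm{\x{k+1}} - 2\inpr{\x{k+1}}{\x{k}}$.

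Substituting these two expressions into the right-hand side gives
\begin{equation*}
2\normm{\x{k+1}} - 2\inpr{\x{k+1}}{\x{k}} - \normm{\x{k+1}} - \normm{\x{k}} + 2\inpr{\x{k+1}}{\x{k}},
\end{equation*}
in which the inner-product terms cancel, leaving exactly $\normm{\x{k+1}} - \normm{\x{k}}$, which is the left-hand side.

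There is no real obstacle here; the only thing to be careful of is bookkeeping of the signs between the two appearances of $\inpr{\x{k+1}}{\x{k}}$. Alternatively, one could view the identity as the standard polarization/telescoping formula $\normm{b}-\normm{a} = 2\inpr{b}{b-a} - \normm{b-a}$ applied with $a=\x{k}$, $b=\x{k+1}$, which is precisely the discrete analogue of the chain rule $\frac{d}{dt}\tfrac{1}{2}\normm{x(t)} = \inpr{x(t)}{\dot x(t)}$ with an exact error term $-\normm{\x{k+1}-\x{k}}$ accounting for the discretization; this conceptual framing justifies the remark in the text that the lemma serves as an ``error-containing discrete chain rule of the squared norm.''
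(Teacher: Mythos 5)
Your proof is correct and is essentially the paper's own argument: the paper states the law of cosines $\normm{y-x} = \normm{y} + \normm{x} - 2\inpr{y}{x}$ immediately before the lemma and treats the identity as its direct rearrangement with $y=\x{k+1}$, $x=\x{k}$, exactly as you do. Your closing remark about the identity being an error-containing discrete chain rule also matches the paper's stated interpretation.
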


Another famous identity for the Hilbert norm (especially the Euclidean norm) is the parallelogram identity:
\begin{equation}
  \normmb{\frac{x+y}{2}} + \normmb{\frac{x-y}{2}} = \frac{1}{2}(\normm{x} + \normm{y}).
\end{equation}
In this paper, we use a generalization of this identity.
In the following lemma, we recover the parallelogram identity by setting $\alpha = 1/2$.
\begin{lemma}\label{lem:para}
  For all $x,y \in \RR^d$ and $\alpha \in \RR$,
  \begin{equation}
    \normm{\alpha x + (1-\alpha) y} = \alpha \normm{x} + (1-\alpha) \normm{y} - \alpha (1-\alpha) \normm{x-y}.
  \end{equation}
\end{lemma}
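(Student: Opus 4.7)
The plan is to prove the identity by direct expansion in terms of the inner product on $\RR^d$, since $\normm{\cdot}$ denotes the squared Euclidean norm and $\normm{u} = \inpr{u}{u}$. The claim is a purely algebraic identity that does not require any analytic hypothesis on $x$, $y$, or $\alpha$.

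First, I would expand the left-hand side bilinearly:
\begin{equation}
\normm{\alpha x + (1-\alpha) y} = \alpha^2 \normm{x} + 2\alpha(1-\alpha) \inpr{x}{y} + (1-\alpha)^2 \normm{y}.
\end{equation}
Next, I would expand the cross term on the right-hand side via the law of cosines (\cref{lem:cos} applied with the roles of $\x{k+1}$ and $\x{k}$ as $x$ and $y$), writing
\begin{equation}
\normm{x-y} = \normm{x} + \normm{y} - 2\inpr{x}{y},
\end{equation}
so that
\begin{equation}
\alpha\normm{x} + (1-\alpha)\normm{y} - \alpha(1-\alpha)\normm{x-y} = \bigl(\alpha - \alpha(1-\alpha)\bigr)\normm{x} + \bigl((1-\alpha) - \alpha(1-\alpha)\bigr)\normm{y} + 2\alpha(1-\alpha)\inpr{x}{y}.
\end{equation}
Using $\alpha - \alpha(1-\alpha) = \alpha^2$ and $(1-\alpha) - \alpha(1-\alpha) = (1-\alpha)^2$, the right-hand side matches the expansion of the left-hand side exactly, completing the proof.

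There is essentially no obstacle here; the identity is elementary and amounts to collecting quadratic terms in $\alpha$. An alternative one-line approach would be to write $\alpha x + (1-\alpha)y = y + \alpha(x-y)$, expand $\normm{y + \alpha(x-y)} = \normm{y} + 2\alpha\inpr{y}{x-y} + \alpha^2\normm{x-y}$, and then rewrite $2\alpha\inpr{y}{x-y} = \alpha\normm{x} - \alpha\normm{y} - \alpha\normm{x-y}$ again via the law of cosines; this yields the stated formula directly. Either route is routine, so I would simply present the direct expansion for clarity.
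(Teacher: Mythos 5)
Your proof is correct and is essentially the same as the paper's: the paper expands $\normm{\alpha x + (1-\alpha)y}$ and $\alpha(1-\alpha)\normm{x-y}$ bilinearly and adds the two identities, which is exactly the algebra you perform when you expand both sides and match coefficients. No gap; nothing further is needed.
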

\begin{proof}
  The claim is obtained by adding each side of the following equalities: 
  \begin{align}
    \normm{\alpha x + (1-\alpha) y} &= \alpha^2 \normm{x} + (1-\alpha)^2 \normm{y} + 2 \alpha(1-\alpha) \inpr{x}{y},\\
    \alpha (1-\alpha) \normm{x-y} &= \alpha (1-\alpha) \normm{x} + \alpha (1-\alpha) \normm{y} - 2 \alpha (1-\alpha) \inpr{x}{y}.
  \end{align}
\end{proof}

\section{Proofs of theorems in \cref{sec:discussions}}

\subsection{Proof of \cref{thm:cr_PL}}

It is sufficient to show that 
\begin{equation}
    E(t) \coloneqq f(x(t)) - f^\star
\end{equation}
satisfies $ \dot{E} \le - 2 \mu E $. Indeed,
% \begin{align}
%     \dot{E} &= \inpr{\nabla f(x)}{\dot{x}} \\
%     &= - \normm{\nabla f(x)} \\
%     &= -2\mu(f(x) - f^\star) \\
%     &= -2\mu E(t)
% \end{align}
\begin{align}
    \dot{E} &= \inpr{\nabla f(x)}{\dot{x}} 
    = - \normm{\nabla f(x)} 
    \le -2\mu(f(x) - f^\star) 
    = -2\mu E
\end{align}
holds. 
Here, we used the chain rule, the continuous system itself, the P{\L} condition, and the definition of $E$ in this order.
\qed

\subsection{Proof of \cref{thm:PLwDG}}\label{append:PLwDG}
By the P{\L} condition, we observe that
\begin{align}
  -\norm*{\WDG f(y,x)} &\le -\sqrt{2\mu (f(x)-f^\star)} + \norm*{\nabla f(x)} - \norm*{\WDG f(y,x)}\\
  & \le -\sqrt{2\mu (f(x)-f^\star)} + \norm*{\nabla f(x) - \WDG f(y,x)}.
\end{align}
Thus, %$\gamma = 2\mu$, and 
the evaluation of $\norm*{\WDG f(y,x) - \nabla f(x)}$ yields $\beta$.

\begin{enumerate}[(i)]
\item
%Ad~(\ref{PLEE}). 
From $L$-smoothness $\alpha = L/2$ follows. By the definition $\beta = 0$.

\item 
%Ad~(\ref{PLIE}). 
$L$-smoothness yields $\alpha = L/2$ and $\beta = L$. (Note that the convexity of $f$ would imply $\alpha=0$, but it is not assumed now. If we adopt the other definition~\eqref{wPL2} then $\beta = 0$.)

\item
%Ad~(\ref{PLmid}).
By the same application of $L$-smoothness, we obtain $\alpha = L/8$ and $\beta = L/2$.

\item
%Ad~(\ref{PLAVF}), 
Since the discrete chain rule exactly holds, $ \nabla_{\mathrm{AVF}} f $ satisfies $ \alpha = 0 $. 
Then, by the $L$-smoothness of $f$, 
\begin{align}
  \norm*{\nabla_{\mathrm{AVF}} f(y,x) - \nabla f(x)}
  &= \norm*{\int_0^1 \nabla f(\tau y + (1-\tau)x) \dd \tau - \nabla f(x)}\\
  &\le \int_0^1 \norm*{\nabla f(\tau y + (1-\tau)x) - \nabla f(x)} \dd \tau\\
  &\le \int_0^1 L \norm{\tau y + (1-\tau) x - x} \dd \tau\\
  &\le \int_0^1 L \tau \norm{y-x} \dd \tau\\
  &\le \frac{L}{2} \norm{y-x}
\end{align}
holds, which implies $\beta = L/2$. 

\item 
Similar to the case (\ref{PLAVF}), $ \alpha = 0 $ holds. 
By the $L$-smoothness of $f$, 
\begin{align}
  \norm*{\nabla_{\mathrm{G}} f(y,x) - \nabla f(x)}
  & = \norm*{\nabla f\paren*{\frac{y+x}{2}} - \frac{f(y) - f(x) - \inpr*{\nabla f\paren*{\frac{y+x}{2}}}{y-x}}{\normm{y-x}}(y-x) - \nabla f(x)}\\
  &\le \norm*{\nabla f\paren*{\frac{y+x}{2}} - \nabla f(x)} + \frac{\abs{f(y) - f(x) - \inpr*{\nabla f\paren*{\frac{y+x}{2}}}{y-x}}}{\norm{y-x}} \\
  &\le \frac{L}{2}\norm{y-x} + \frac{L}{8}\norm{y-x}\\
  &= \frac{5L}{8}\norm{y-x},
\end{align}
which implies $ \beta = 5L/8 $. 

\item 
Similar to the previous cases, $ \alpha = 0 $ holds. 
Using the same notation as in \cref{append:wDG} (\ref{IA}), we obtain
\begin{align}
  \norm*{\nabla_{\mathrm{IA}} f(y,x) - \nabla f(x)}
  &= \norm*{
    \begin{bmatrix}
      \frac{f(y_1,x_2,x_3\dots,x_d) - f(x_1,x_2,x_3,\dots,x_d)}{y_1-x_1}\\
      \frac{f(y_1,y_2,x_3\dots,x_d) - f(y_1,x_2,x_3,\dots,x_d)}{y_2-x_2}\\
      \vdots\\
      \frac{f(y_1,y_2,y_3,\dots,y_d) - f(y_1,y_2,y_3\dots,x_d)}{y_d-x_d}\\
    \end{bmatrix}
    - \nabla f(x)} \\
  &= \norm*{
    \begin{bmatrix}
      \partial_1 f(\theta_1 y_1 + (1-\theta_1) x_1,x_2,x_3\dots,x_d)\\
      \partial_2 f(y_1,\theta_2 y_2 + (1-\theta_2) x_2,x_3\dots,x_d)\\
      \vdots\\
      \partial_d f(y_1,y_2,y_3,\dots,\theta_d y_d + (1-\theta_d) x_d) \\
    \end{bmatrix}
    - \nabla f(x)} \\
  & = \sqrt{ \sum_{k=1}^d \abs*{ (\nabla f(\theta_k y_{1:k}x_{k+1:d} + (1-\theta_k) y_{1:k-1}x_{k:d}))_k - (\nabla f(x))_k }^2} \\
  &\le \sqrt{\sum_{k=1}^d L^2\normm{y-x}} \\
  &= \sqrt{d}L \norm{y-x},
\end{align}
where $\theta_k \in [0,1]$ is a constant by the mean value theorem. 
Therefore, $\beta = \sqrt{d} L $ holds. 
\end{enumerate}
\qed

\subsection{Proof of \cref{thm:cr_PL_d}}

   Let 
  \begin{equation}
    \tE{k} \coloneqq f \paren*{\x{k}} - f^\star.
  \end{equation}
  If $\dptE \le -c \tE{k}$ for $c>0$, it can be concluded that $\E{k} = (1-ch)^{-k} \tE{k}$ is nonincreasing and hence $f \paren*{\x{k}} - f^\star \le (1-ch)^{k}\E{0}$.
  Before starting the computation of $\dptE$, we transform the weak discrete P{\L} condition~\eqref{wPL} into a more convenient form. By substituting the scheme into \eqref{wPL}, we obtain
  \begin{equation}
    -\normm{\WDG f(\x{k+1},\x{k})} \le -\frac{\gamma}{(1 + \beta h)^2} \paren*{ f \paren*{\x{k}} - f^\star}.
  \end{equation}
  Thus, it follows from the weak discrete chain rule~\eqref{wPLchain}, the scheme, and the above inequality, that
  \begin{align}
    \dptE &= \delp f \paren*{ \x{k}}\\
    &\le \inpr*{\WDG f \paren*{ \x{k+1},\x{k}}}{ \delta^+ \x{k} } + \alpha h \normm{\delta^+ \x{k}}\\
    &= -(1 - \alpha h)\normm{\WDG f\paren*{ \x{k+1},\x{k}}} \\
    &\le - \paren*{ 1 - \alpha h} \frac{\gamma}{(1+ \beta h)^2} \paren*{ f\paren*{ \x{k}} - f^\star}\\
    &= -\gamma\frac{1-\alpha h}{(1 + \beta h)^2} \E{k}.
  \end{align}
  Hence if $h \le 1/\alpha$ we have the convergence.
  \qed

\section{Some numerical examples}\label{sec:num_exp}
In this section, we give some numerical examples to complement the discussion in the main body of this paper.
Note that this is just to illustrate that we can actually easily construct new concrete methods just by assuring the conditions of weak discrete gradients (weak DGs), and that the resulting methods in fact achieve the prescribed rates; we here do not intend to explore a method that beats known state-of-the-art methods. It is of course an ultimate goal of the unified framework project, but is left as an important future work.

Below we consider some explicit optimization methods derived as special cases of the abstract weak DG methods.
Here we pick up simple two-dimensional problems so that we can observe not only the decrease of the objective functions but also the trajectories of the points $x$'s for our intuitive understandings.

First, we consider the case where the objective function is a $L$-smooth convex function.
An explicit weak discrete gradient method is then found as
\begin{equation}
  \simulparen{
    \begin{aligned}
      \x{k+1} - \x{k} &= \frac{2k+1}{k^2} \paren*{ \vv{k+1}-\x{k+1}}, \\
      \vv{k+1} - \vv{k} &= -\frac{2k+1}{4} h^2 \nabla f \paren*{ \z{k} },\\
      \z{k}-\x{k} &= \frac{2k+1}{(k+1)^2} \paren*{ \vv{k}-\x{k}}.
    \end{aligned}
  }\label{5wDGc}
\end{equation}
We call this method (wDG-c). 
This is the simplest example of the abstract method in~\cref{thm:cr_ac_conv_d}, where we choose $A_k = (kh)^2$ and $\WDG f(y,x) = \nabla f (x)$.
The authors believe this method itself has not been explicitly pointed out in the literature, and is new.
The expected rate is the one predicted in the theorem, $\Order{1/k^2}$, under the step size condition $h \le 1/\sqrt{L}$ (recall that $\alpha$ for the weak DG is $L/2$ as shown in~\cref{thm:wDG}).
For comparison, we pick up Nesterov's accelerated gradient method for convex functions
\begin{equation}
  \simulparen{
    \y{k+1} &= \x{k} - h^2 \nabla f \paren*{ \x{k} },\\
    \x{k+1} &= \y{k+1} + \frac{k}{k+3} \paren*{ \y{k+1}-\y{k}}.
  }\label{5NAGc}
\end{equation}
We denote this method by (NAG-c). 
It is well-known that it achieves the same rate, under the same step size condition; we summarize these information in~\cref{table:5ex}. 

As an objective function, we employ
\begin{equation}
  f(x) = 0.1{x_1}^4 + 0.001{x_2}^4, \label{obj5c}
\end{equation}
which is not strongly convex.
(Strictly speaking, this is not $L$-smooth as well, but we consider in the following way: for each initial $x$ we obtain the level set $\{ x\, | \, f(x) = f(\x{0})\}$. Then we consider the function~\eqref{obj5c} inside the region, and extend the function outside it appropriately; for example such that the function grows linearly as $\|x\|\to\infty$.)

Numerical results are shown in \cref{fig:c}.
The top-left panel of the figure shows the convergence of the objective function $f(x)$ when the optimal step size $1/\sqrt{L}$ is chosen.
We see both methods achieve the predicted rate $\Order{1/k^2}$ (mind the dotted guide line).
We also see that under this setting, (wDG-c) converges faster than (NAG-c).
This suggests that the new framework can give rise to an optimization method that is competitive to state-of-the-art methods (as said before, we do not say anything conclusive on this point; in order to discuss practical performance, we further need to discuss other implementation issues such as stepping schemes.)
The trajectories of $\x{k}$'s are almost the same, for all the tested step sizes.
%Theoretical and experimental results imply that (wDG-c) is another implementation of the accelerated method, which can be interpreted as a numerical method easier than Nesterov's method.

\begin{table}[htbp]
  \caption{Step size limitations and convergence rates of the methods used in the experiments}
  \begin{center}
    \begin{tabular}{c|c|c} \hline
      scheme & step size limitation & convergence rate \\
      \hline
      NAG-c~\eqref{5NAGc} & $1/\sqrt{L}$ & $\Order{\frac{1}{k^2}}$ \\
      wDG-c~\eqref{5wDGc} & $1/\sqrt{L}$ & $\Order{\frac{1}{k^2}}$ \\
      \hline
      NAG-sc~\eqref{5NAGsc} & $1/\sqrt{L}$ & $\Order{\paren*{1-\sqrt{\frac{\mu}{L}}}^k}$ \\
      wDG-sc~\eqref{5wDGsc} & $1/(\sqrt{L}- \sqrt{\mu})$ & $\Order{\paren*{1-\sqrt{\frac{\mu}{L}}}^k}$ \\
      wDG2-sc~\eqref{5wDGscnoy} & $\sqrt{\mu}/(L-\mu)$ & $\Order{\paren*{1-\frac{\mu}{L}}^k}$ \\
      \hline
    \end{tabular}
  \end{center}
  \label{table:5ex}
\end{table}

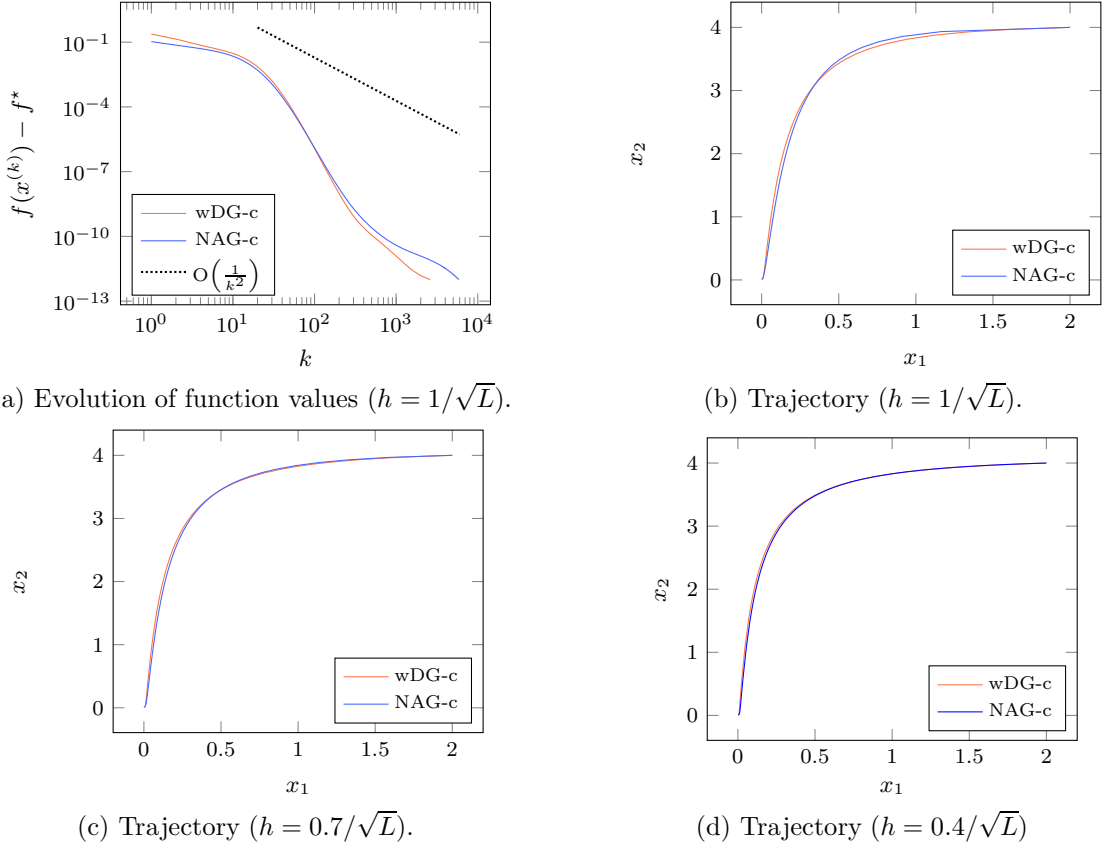
\begin{figure}[htbp]
  \begin{minipage}[b]{0.49\textwidth}
    \begin{center}
      \begin{tikzpicture}
        %\begin{semilogyaxis}[
        \begin{loglogaxis}[
          compat=1.16,
          legend pos= south west, %凡例の位置
          xlabel=$k$,ylabel=$f(\x{k})-f^\star$,
          small,no markers]
          \addplot[mark size=1pt, myorange, mark = +, mark repeat = 50, mark phase = 2] table [x index=0, y index=1] {dat/wDGctraj1.dat};
          \addplot+[mark size=1pt, myblue, mark = o, mark repeat = 50, mark phase = 2] table [x index=0, y index=1] {dat/NAGctraj1.dat};
          \addplot[black,densely dotted,thick,domain=20:6e3] {192/x/x};
%          \addplot coordinates {(1,1) (5e3,4e-8)};
          \legend{[font=\scriptsize]wDG-c,[font=\scriptsize]NAG-c,[font=\scriptsize] $ \Order{ \frac{1}{k^2}} $};
        %\end{semilogyaxis}%
        \end{loglogaxis}
      \end{tikzpicture}%

      (a) Evolution of function values ($h = 1/\sqrt{L}$).
    \end{center}
  \end{minipage}
  \begin{minipage}[b]{0.48\textwidth}
    \begin{center}
      \begin{tikzpicture}
        \begin{axis}[
          legend pos= south east, %凡例の位置
          xlabel=$x_1$,ylabel=$x_2$,
          small,no markers]
          \addplot+[mark size=3pt, myorange, mark = +] table [x index=2, y index=3] {dat/wDGctraj1f100.dat};
          \addplot+[mark size=1pt, myblue, mark = o] table [x index=2, y index=3] {dat/NAGctraj1f100.dat};
          \legend{[font=\scriptsize]wDG-c,[font=\scriptsize]NAG-c};
        \end{axis}%
      \end{tikzpicture}%

      (b) Trajectory ($h = 1/\sqrt{L}$).
    \end{center}
  \end{minipage}

  \vspace{5pt}

  \begin{minipage}[b]{0.49\textwidth}
    \begin{center}
      \begin{tikzpicture}
        \begin{axis}[
          legend pos= south east, %凡例の位置
          xlabel=$x_1$,ylabel=$x_2$,
          small,no markers]
          \addplot+[mark size=3pt, myorange, mark = +] table [x index=2, y index=3] {dat/wDGctraj0.7f100.dat};
          \addplot+[mark size=1pt, myblue, mark = o] table [x index=2, y index=3] {dat/NAGctraj0.7f100.dat};
          \legend{[font=\scriptsize]wDG-c,[font=\scriptsize]NAG-c};
        \end{axis}%
      \end{tikzpicture}%
      % \caption{Trajectories of wSGsc1 ()}

      (c) Trajectory ($h = 0.7/\sqrt{L}$).
    \end{center}
  \end{minipage}
  \begin{minipage}[b]{0.49\textwidth}
    \begin{center}
      \begin{tikzpicture}
        \begin{axis}[
          compat=1.16,
          legend pos= south east, %凡例の位置
          xlabel=$x_1$,ylabel=$x_2$,
          small,no markers]
          \addplot+[mark size=3pt, myorange, mark = +] table [x index=2, y index=3] {dat/wDGctraj0.4f100.dat};
          \addplot+[mark size=1pt, blue, mark = o] table [x index=2, y index=3] {dat/NAGctraj0.4f100.dat};
          \legend{[font=\scriptsize]wDG-c,[font=\scriptsize]NAG-c};
        \end{axis}%
      \end{tikzpicture}%

      (d) Trajectory ($h = 0.4/\sqrt{L}$)
    \end{center}
  \end{minipage}
  \caption{Trajectories and function values by (wDG-c) and (NAG-c). The objective function is~\eqref{obj5c} and the initial solution is $(2,4)$.}
  \label{fig:c}
\end{figure}

Next, we consider methods for strongly-convex functions.
We use the following explicit weak DG method:
\begin{equation}
  \simulparen{
      \xksxk &= \sqrt{\mu}h \paren*{ \vksxks },\\
      \vksvk &= \sqrt{\mu}h\paren*{\z{k} -\vv{k+1}-\frac{\nabla f \paren*{ \z{k} }}{\mu}}, \\
      \z{k} - \x{k} &= \sqrt{\mu}h \paren*{ \x{k} + \vv{k} - 2\z{k}}.
  }\label{5wDGsc}
\end{equation}
We call this (wDG-sc).
This can be obtained by setting $\WDG f(y,x) = \nabla f(x)$ in \cref{thm:cr_ac_sc_d}. 
Since for this choice we have $\alpha = L/2$ and $\beta = \mu/2$ (\cref{thm:wDG}), the step size condition is $h \le 1/(\sqrt{L} - \sqrt{\mu})$, and the predicted rate is $\Order{(1-\sqrt{\mu/L})^k}$ (which is attained by the largest $h = 1/(\sqrt{L} - \sqrt{\mu})$).

As before, we compare this method with Nesterov's accelerated gradient method for strongly convex functions (NAG-sc):
\begin{equation}
  \simulparen{
    \y{k+1} &= \x{k} - h^2 \nabla f \paren*{ \x{k} },\\
    \x{k+1} &= \y{k+1} + \frac{1-\sqrt{\mu}h}{1+\sqrt{\mu}h} \paren*{ \y{k+1}-\y{k} }.
  }\label{5NAGsc}
\end{equation}

Here, in addition to these, we also consider a simpler method, where $\z{k} = \x{k}$ is chosen to find
\begin{equation}
  \simulparen{
      \xksxk &= \sqrt{\mu}h \paren*{ \vksxks },\\
      \vksvk &= \sqrt{\mu}h\paren*{\x{k} -\vv{k+1}-\frac{\nabla f \paren*{ \x{k} }}{\mu}}. \\
  }\label{5wDGscnoy}
\end{equation}
We call this (wDG2-sc).
This method is more natural as a numerical method for the accelerated gradient flow~\eqref{accgfsc}, compared to the methods above, and we expect it illustrates how ``being natural as a numerical method'' affects the performance.
The rate and the step size limitation were revealed in the proof of~\cref{thm:cr_ac_sc_d} (\cref{app:proof_discrete}).

We summarized the step size limitations and rates in~\cref{table:5ex}.
Notice that the predicted rate of (wDG-sc) is better than that of (wDG2-sc).

The objective function is taken to be the quadratic function
\begin{equation}
  f(x) = 0.001(x_1-x_2)^2 + 0.1(x_1+x_2)^2 + 0.01x_1+0.02x_2, \label{obj5sc}
\end{equation}
and results are shown is \cref{fig:sc}.
Again the top-left panel shows the convergence of the objective function.
We see that (wDG-sc) and (NAG-sc) with each optimal step size show almost the same convergence,
which is in this case much better than the predicted worst case rate (the dotted guide line).
(wDG2-sc) slightly falls behind the other two, but it eventually achieves almost the same performance as $k\to\infty$.
%and (wDG2-sc) with (wDG-sc)'s optimal step size also shows almost the same rate despite breaking the step size constraint. 
The trajectories of the points $\x{k}$'s are, however, quite different among the three methods, which is interesting to observe.
The trajectory of (wDG2-sc) seems to suffer from wild oscillations, while (wDG-sc) generates milder trajectory.
(NAG-sc) comes between these two.
We need careful discussion to conclude which dynamics is the best as an optimization method, but if we consider such oscillations are not desirable (possibly causing some instability), it might suggest that (wDG-sc) is the first choice for this problem.
In any case, in this way we can explore various concrete optimization method within the framework of the weak DG by varying the weak DG, which is exactly the main claim of this paper.

\begin{figure}[htbp]
  \begin{minipage}[b]{0.49\textwidth}
    \begin{center}
      \begin{tikzpicture}
        \begin{semilogyaxis}[
          compat=1.16,
          legend pos= south west, 
          xlabel=$k$,ylabel=$f(\x{k})-f^\star$,
          small]
          \addplot+[mark size=3pt, myorange, mark = +] table [only marks, x index=0, y index=1] {dat/wDGsctraj1.dat};
          \addplot+[mark size=1pt, mylime, mark = x] table [only marks, x index=0, y index=1] {dat/wDGscnoytraj1.dat};
          \addplot+[mark size=1pt, myblue, mark = o] table [only marks, x index=0, y index=1] {dat/NAGsctraj1.dat};
          \addplot[black,densely dotted,thick,domain=1:140] {exp(x*ln(0.9))};
          \legend{[font=\scriptsize]wDG-sc,[font=\scriptsize]wDG2-sc,[font=\scriptsize]NAG-sc,[font=\scriptsize] $ \Order{ \paren*{ 1 - \sqrt{\frac{\mu}{L}} }^k } $};
        \end{semilogyaxis}%
      \end{tikzpicture}%

      (a) Evolution of function values ($h = 1/(\sqrt{L}-\sqrt{\mu})$ for wDG-sc and wDG2-sc, and $h = 1/\sqrt{L}$ for NAG-sc). 
    \end{center}
  \end{minipage}
  \begin{minipage}[b]{0.49\textwidth}
    \begin{center}
      \begin{tikzpicture}
        \begin{axis}[
          legend pos= north west, 
          xlabel=$x_1$,ylabel=$x_2$,
          small]
          \addplot+[mark size=3pt, myorange, mark = +] table [x index=2, y index=3] {dat/wDGsctraj1.dat};
          \addplot+[mark size=1pt, mylime, mark = x] table [x index=2, y index=3] {dat/wDGscnoytraj1.dat};
          \addplot+[mark size=1pt, myblue, mark = o] table [x index=2, y index=3] {dat/NAGsctraj1.dat};
          \legend{[font=\scriptsize]wDG-sc,[font=\scriptsize]wDG2-sc,[font=\scriptsize]NAG-sc};
        \end{axis}%
      \end{tikzpicture}%
      % \caption{Trajectories of wSGsc1 ()}

      (b) Trajectory ($h = 1/(\sqrt{L}-\sqrt{\mu})$ for wDG-sc and wDG2-sc, and $h = 1/\sqrt{L}$ for NAG-sc).
    \end{center}
  \end{minipage}

\vspace{5pt}

  \begin{minipage}[b]{0.49\textwidth}
    \begin{center}
      \begin{tikzpicture}
        \begin{axis}[
          legend pos= north west, 
          xlabel=$x_1$,ylabel=$x_2$,
          small]
          \addplot+[mark size=3pt, myorange, mark = +] table [x index=2, y index=3] {dat/wDGsctraj0.7.dat};
          \addplot+[mark size=1pt, mylime, mark = x] table [x index=2, y index=3] {dat/wDGscnoytraj0.7.dat};
          \addplot+[mark size=1pt, myblue, mark = o] table [x index=2, y index=3] {dat/NAGsctraj0.7.dat};
          \legend{[font=\scriptsize]wDG-sc,[font=\scriptsize]wDG2-sc,[font=\scriptsize]NAG-sc};
        \end{axis}%
      \end{tikzpicture}%
      % \caption{Trajectories of wSGsc1 ()}

      (c) Trajectory ($h = 0.7/\sqrt{L}$)
    \end{center}
  \end{minipage}
  \begin{minipage}[b]{0.49\textwidth}
    \begin{center}
      \begin{tikzpicture}
        \begin{axis}[
          compat=1.16,
          legend pos= north west, 
          xlabel=$x_1$,ylabel=$x_2$,
          small]
          \addplot+[mark size=3pt, myorange, mark = +] table [x index=2, y index=3] {dat/wDGsctraj0.4.dat};
          \addplot+[mark size=1pt, mylime, mark = x] table [x index=2, y index=3] {dat/wDGscnoytraj0.4.dat};
          \addplot+[mark size=1pt, myblue, mark = o] table [x index=2, y index=3] {dat/NAGsctraj0.4.dat};
          \legend{[font=\scriptsize]wDG-sc,[font=\scriptsize]wDG2-sc,[font=\scriptsize]NAG-sc};
        \end{axis}%
      \end{tikzpicture}%

      (d) Trajectory ($h = 0.4/\sqrt{L}$)
    \end{center}
  \end{minipage}
  \caption{Trajectories and function values by (wDG-sc), (wDG2-sc) and (NAG-sc). The objective function is~\eqref{obj5sc} and the initial value is $(2,3)$.}
  \label{fig:sc}
\end{figure}

\end{document}